\documentclass[11pt]{article}
\usepackage[T1]{fontenc}
\usepackage[utf8]{inputenc}
\usepackage{lmodern}
\usepackage{amsmath,amssymb,amsthm,mathtools}
\usepackage{mathrsfs}
\usepackage[margin=1in]{geometry}
\usepackage[numbers,sort&compress]{natbib}
\usepackage[hidelinks]{hyperref}
\newtheorem{theorem}{Theorem}
\newtheorem{lemma}{Lemma}

\theoremstyle{definition}
\newtheorem{definition}{Definition}
\theoremstyle{remark}
\newtheorem{remark}{Remark}
\newcommand{\E}{\mathbb E}
\newcommand{\R}{\mathbb R}
\newcommand{\GGC}{\operatorname{GGC}}
\newcommand{\DP}{\operatorname{DP}}
\newcommand{\Law}{\mathcal L}
\newcommand{\dd}{\mathop{}\!\mathrm d}
\newcommand{\Ptwo}{\mathcal P_2(\R)}

\title{Powers of generalized gamma convolutions}
\author{Min Wang\thanks{School of Mathematics and Statistics, Wuhan University of Technology, Wuhan, 430063, China.
Email: \texttt{minwangmath@whut.edu.cn}.}
\and Sheng Yin\thanks{Institute for Advanced Study in Mathematics, Harbin Institute of Technology, Harbin, 150001, China.
Email: \texttt{sheng.yin@hit.edu.cn}.}}
\date{}

\begin{document}
\maketitle

\begin{abstract}
In 2015, Bondesson showed that Thorin’s class of generalized gamma convolutions (GGCs) has the remarkable property of being closed under multiplication of independent random variables. He also conjectured that the GGC class is closed under taking powers of order greater than one. In this paper,  we provide a candidate proof for Bondesson’s conjecture. %The proof uses a nonlinear evolution of the Thorin measure in logarithmic rate coordinates. A gamma--Dirichlet representation and a bounded Stieltjes phase yield a nonnegative jump kernel and a drift of at most linear growth. These estimates allow us to construct the evolution by positive Euler approximations and to identify its associated distributions through a linear transport equation. Weak approximation by finite gamma convolutions then gives the result without restrictions on the drift, the Thorin mass, or the moments of the initial distribution.
\end{abstract}

\section{Introduction}
\label{sec:introduction}

In this paper, we study the behavior of generalized gamma convolutions under powers of order greater than one.
We use the following definition%, which includes degenerate distributions
; see \cite[Section~3.1]{Bondesson1992}.

\begin{definition}[Generalized gamma convolutions]
\label{def:ggc}
A probability distribution on $[0,\infty)$ is a generalized gamma convolution if it is a weak limit of distributions of finite sums of independent gamma random variables.
We denote this class by $\GGC$.
\end{definition}

The class is closed with
respect to change in scale, weak limits, and addition of independent random variables.
Bondesson proved that it is also closed under multiplication of independent random variables \cite[Theorem~1]{Bondesson2015}.
In \cite[Conjecture~1, p.~1075]{Bondesson2015}, Bondesson asked whether $X^q$ has a GGC distribution whenever $X$ does and $q\ge1$.
Here and throughout, $X^q$ is the ordinary power of a nonnegative random variable.

Our main result answers this question.

\begin{theorem}[Power closure]
\label{thm:main}
Let $X$ be a nonnegative real-valued random variable with a generalized gamma convolution distribution.
Then $X^q$ has a generalized gamma convolution distribution for every real $q\ge1$.
\end{theorem}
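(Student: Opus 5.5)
The plan is to reduce to finite gamma convolutions and then deform the exponent continuously, tracking the Thorin measure along the way.

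\textbf{Reduction.} Since $x\mapsto x^q$ is continuous on $[0,\infty)$, weak convergence $X_n\Rightarrow X$ implies $X_n^q\Rightarrow X^q$. The class $\GGC$ is closed under weak limits. It therefore suffices to treat $X=a+\sum_{i=1}^n \gamma_i/u_i$, with $a\ge0$, $u_i>0$ and independent $\gamma_i\sim\mathrm{Gamma}(\beta_i)$. By a further approximation we may take $a>0$ small or $a=0$ as convenient.

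\textbf{Semigroup in the exponent.} Since $(X^{q_1})^{q_2}=X^{q_1q_2}$, I will use the logarithmic time $t=\log q$. Put $Y_t=X^{e^t}$ and write its Laplace exponent in Thorin form,
\[
-\log\E e^{-sY_t}=a_t s+\int_{(0,\infty)}\log\Bigl(1+\frac su\Bigr)\,U_t(\dd u),
\]
so far only as a possibly signed representation. Work in log-rate coordinates $v=\log u$. The goal is a nonlinear evolution equation for the pair $(a_t,U_t)$ of the form
\[
\partial_t U_t = -\partial_v(b(U_t)\,U_t)+\mathcal J(U_t),
\]
with two required properties: $\mathcal J$ is a jump operator whose kernel is nonnegative, and $b$ is a drift of at most linear growth.

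To identify $b$ and $\mathcal J$, I differentiate $\E e^{-sX^{q}}$ in $q$ at $q=1$, which produces $\E\bigl[X\log X\,e^{-sX}\bigr]$. For a finite gamma convolution I will compute this through the gamma--Dirichlet representation. Write $\sum\gamma_i/u_i=G\cdot\sum D_i/u_i$, where $G\sim\mathrm{Gamma}(\sum\beta_i)$ is independent of the Dirichlet vector $D$. Then $\log X$ splits into the explicit term $\log G$ and the term $\log\sum D_i/u_i$. The latter is handled by a Stieltjes/Cauchy-type representation whose phase (argument) is bounded in $[0,\pi]$.

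Bounded phase is what should force the infinitesimal change of $-\log\E e^{-sY}$ to be:
\begin{itemize}
\item a nonnegative combination of the functions $\log(1+s/w)$, giving the jump kernel $\mathcal J$;
\item plus a transport term in $v$, giving the drift $b$;
\item plus a linear term in $s$.
\end{itemize}
Each of these preserves the cone of Thorin data.

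\textbf{Construction and identification.} With these estimates in hand, I build the solution by positive Euler steps. At each step, $U\mapsto U+h\,\mathcal J(U)$ pushed forward along the flow of $b$ stays a nonnegative measure. The linear growth of $b$, together with a bound on the total mass of $\mathcal J(U)$, gives tightness and mass bounds on compact $t$-intervals. This yields a limit $(a_t,U_t)$ with $U_t\ge0$.

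I then need to identify the GGC laws $\mu_t$ associated with $(a_t,U_t)$ with $\Law(X^{e^t})$. Both families solve the same linear transport equation $\partial_t\mu_t+\partial_x\bigl(x\log x\,\mu_t\bigr)=0$, whose characteristics are $x\mapsto x^{e^t}$. This identification is by a uniqueness argument for that linear equation, tested against $e^{-sx}$. Undoing the reduction, weak approximation then removes all restrictions on the drift, the Thorin mass and the moments.

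\textbf{Main obstacle.} I expect the main obstacle to be the sign computation: showing that the jump kernel extracted from $\E[X\log X\,e^{-sX}]$ is genuinely nonnegative, uniformly along the evolution and not just at $t=0$. This is where the bounded Stieltjes phase must be used carefully. My first attempt will be to write $\log\sum_i D_i/u_i$ as $\int_0^\infty\bigl(\frac{1}{1+r}-\frac{1}{r+\sum_i D_i/u_i}\bigr)\dd r$. I will then average over the Dirichlet law using the Markov--Krein (Cifarelli--Regazzini) identity, which expresses $\E\,(r+\sum_i D_i/u_i)^{-\sum\beta_i}$ through the Thorin measure, and read off positivity from the phase lying in $[0,\pi]$. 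A secondary difficulty is controlling the drift near $u\to0$ and $u\to\infty$ so that the Euler scheme does not lose mass.
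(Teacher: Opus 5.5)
\textbf{Genuine gap.} Your outline matches the paper's architecture: reduction to finite gamma convolutions, $t=\log q$, the gamma--Dirichlet representation, a bounded phase, a positive Euler scheme, identification through $x\log x\,\partial_x$ and logarithmic transport, and weak closure. But the decisive step, an operator with a nonnegative kernel, is left open, and the route you propose would not produce it.

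\emph{Why the Markov--Krein route fails.} The identity controls $\E(z+\mathcal M)^{-B}$, with $\mathcal M=\sum_iD_i/u_i$. Its boundary argument is $\pi$ times a $U$-mass, so it ranges over $[0,\pi B]$, not $[0,\pi]$. It produces $(r+\mathcal M)^{-B}$, not the first-order resolvents $(r+\mathcal M)^{-1}$ in your formula for $\log\mathcal M$, except when $B=1$. More fundamentally, the object to be represented is not a mean of $\log\mathcal M$. It is the mixed expectation $\E_{\DP(U)}\bigl[W_P(s)\{\psi_0(B+1)+1+\log M_P(s)\}\bigr]$, where $M_P(s)=\int P(\dd b)/(s+b)$ is the tilted mean and $W_P(s)=\int b(s+b)^{-2}P(\dd b)$.

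\emph{The missing idea} is the one-observation posterior (Palm) identity. Because $W_P$ is linear in $P$, the expectation equals $B^{-1}\int U(\dd b)\,b(s+b)^{-2}\,\E_{\DP(U+\delta_b)}[\,\cdots]$. This localizes the weight at a distinguished rate $b=e^y$ and plants an atom at $b$ in the posterior. Only then is the phase used, pathwise: for each realization, $\log M_P(s)-\log M_P(b)=\int_0^\infty\xi_P(bu)\{b/(s+bu)-1/(1+u)\}\,\dd u$ with $0\le\xi_P\le1$. An explicit kernel identity, with the universal measure $\nu_0(\dd v)=\dd v/(4\sinh^2(v/2))$ and a correction kernel $K$, turns this into a drift plus a fully compensated jump operator. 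Its acceptance is $k_{B,F}(y,v)=\E_{\DP(U+\delta_b)}\xi_P(be^v)\in[0,1]$. The posterior atom, of $\operatorname{Beta}(1,B)$ weight $Z$, gives $Z/2\le bM_P(b)\le1$; this is the source of the drift bound $|a_{B,F}(y)-y|\le C_B$. Positivity along the evolution is then automatic, since these coefficients are defined for every base probability $F$. Averaging over the Dirichlet law before extracting a phase destroys exactly this structure.

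\emph{Your evolution ansatz is also inconsistent, in two ways.} First, the Thorin mass can be read off near zero: $\mathbb P(X\le x)\asymp x^{B}$. So $X^q$ has Thorin mass $B/q$, and $U_t$ must lose mass at rate one. Transport plus a nonnegative jump term cannot do this. The paper builds the decay in by writing $U_t=B_0e^{-t}\exp_*F_t$, with $F_t$ a probability evolving under a conservative generator. The resolvent identity $F(\mathcal G_{B,F}\varphi_s)=(h_U(s)+g(s))/B$ carries an extra $g$, which $B_t'=-B_t$ cancels exactly.

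Second, the jump measure $k_{B,F}\,\nu_0$ has infinite mass near $v=0$ and infinite first absolute moment. So there is no bound on the total mass of $\mathcal J(U)$, and $U+h\mathcal J(U)$ is not a legitimate positive step. The paper's Euler kernel instead:
\begin{itemize}
\item truncates jumps at $\epsilon=\sqrt h$;
\item moves the compensator into the drift, which then has size $|y|+C(1+|\log\epsilon|)$;
\item keeps the no-jump probability at least $1/2$;
\item arranges the exact mean increment $h\,a_{B,F}(y)$.
\end{itemize}
Tightness comes from $m_2=\int v^2\nu_0(\dd v)$ and a log-rate second moment, not from a mass bound. Passing to the limit in this nonlinear scheme also needs weak continuity of $F\mapsto F(\mathcal G_{B,F}\varphi)$. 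The paper gets this from weak-star convergence of the phases and uniqueness of the phase representation.

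Your identification step and the final weak-closure reduction are sound in outline. One caveat: recovering the Laplace exponent from the resolvent equation means integrating down to $s=0$, which needs a uniform $|\log x|$ moment of the value laws.
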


%The statement allows positive drift, infinite Thorin mass, and arbitrary tails.It also includes the constant zero.The main part of the proof treats finite gamma convolutions; weak closure will then give the full assertion.

Let $G_1,\ldots,G_n$ be independent gamma random variables with $G_i\sim\operatorname{Gamma}(\beta_i,1)$. Here $\operatorname{Gamma}(\beta,b)$ denotes the gamma distribution with shape $\beta>0$ and rate $b>0$.
Our initial variable has the form
\begin{equation}
 X=\sum_{i=1}^n b_i^{-1}G_i,\qquad
 n\ge1,\quad b_i,\beta_i>0,\qquad B_0=\sum_{i=1}^n\beta_i,
 \label{eq:finite-input}
\end{equation}
where each summand $b_i^{-1}G_i$ has distribution $\operatorname{Gamma}(\beta_i,b_i)$.
Its Thorin measure is $U_0=\sum_i\beta_i\delta_{b_i}$.
Writing
\[
 F_0=B_0^{-1}\sum_{i=1}^n\beta_i\delta_{\log b_i},\qquad
 B_t=B_0e^{-t},
\]
we seek a curve of probability measures $F_t$ for which $U_t=B_t\exp_*F_t$ is the Thorin measure of $X^{e^t}$.
Here $\exp_*F_t$ is the push-forward of $F_t$ under $y\mapsto e^y$, converting log rates to rates; multiplication by $B_t$ gives the total Thorin mass.
The measure $F_t$ describes logarithmic rates; the law of $\log(X^{e^t})$ is a different measure.

The first difficulty is to express power differentiation in a form that preserves positivity of the Thorin measure.
We use the gamma--Dirichlet representation of an exponentially tilted GGC variable to compute this derivative.
The posterior identity for a Dirichlet process then expresses it through a bounded Stieltjes phase.
In logarithmic rate coordinates, this gives an operator with a nonnegative jump kernel, a finite jump second moment, and a drift whose difference from $y$ is bounded independently of the rate distribution.
These bounds are the basis of the existence argument.

The second difficulty is to identify the distributions obtained from that evolution.
The agreement of the infinitesimal derivatives alone does not make this identification.
We prove that the associated value distributions satisfy the weak equation with operator $x\log x\,\partial_x$.
Taking logarithms reduces this equation to linear transport, whose solution is dilation by $e^t$.
It follows that the constructed distributions are precisely the laws of $X^{e^t}$.

Section~\ref{sec:foundations} recalls the Thorin and Dirichlet representations and the bounded phase formula.
Section~\ref{sec:tangent} derives the positive operator, and Section~\ref{sec:evolution} constructs its evolution.
The identification is proved in Section~\ref{sec:identification}, and the weak approximation argument in Section~\ref{sec:completion} completes the proof of Theorem~\ref{thm:main}.
Appendix~\ref{sec:measurable-realizations} supplies the measurable realizations used in the coefficient construction and its continuity proof.

\paragraph{Notation.}
We write $\Law(X)$ for the law of $X$, $\E$ for expectation, and $\overset{\mathrm d}=$ for equality in distribution.
For a measure $\nu$, write $\nu(f)=\int f\,\dd\nu$ whenever the integral is defined, and let $T_*\nu$ be its image under a measurable map $T$.
Weak convergence of probability measures is denoted by $\Rightarrow$.
Further notation is introduced as it is needed.

\paragraph{Disclosure of AI assistance.}
We used OpenAI's models 5.6~\texttt{Sol} and 6~\texttt{Astra} extensively in conducting this research and preparing the manuscript.
Our use of these systems included literature searches and source checks, the organization of research notes, the comparison and revision of proof strategies, the development of mathematical constructions and intermediate arguments, the computational exploration, and the manuscript drafting.
In particular, we drew on AI assistance in developing and internally checking the log-rate generator, the positive Euler evolution, and the identification argument.
We disclose this use as substantive mathematical assistance, and our exploration is documented at the GitHub repository (\url{https://github.com/vtejdn/generalized-gamma-convolution-power-problem}).
See the repository for the complete record of our AI-assisted exploration and development.
The material in this repository is released under the Apache License 2.0, which permits reuse, modification and distribution.

\paragraph{Status of this manuscript.}
We are in the process of finalizing the manuscript, including incorporating feedback and ensuring all proofs and arguments are thoroughly checked.
We share the manuscript and our exploration record in their current form to invite feedback and collaboration from the interested readers.
We will update the repository and the manuscript as we make progress and incorporate feedback.
We will take responsibility for ensuring that the final version of the manuscript is accurate, complete with respect to all references, proofs, and arguments under common standards of the research community.

\section{Preliminaries}
\label{sec:foundations}

We collect the probabilistic and analytic identities used in the proof.
The Thorin representation reduces the problem to positive measures on the rate variable, while the gamma--Dirichlet and posterior identities will allow us to differentiate powers of the corresponding random variables.

For $\beta,b>0$, $\operatorname{Gamma}(\beta,b)$ denotes the gamma distribution with shape $\beta$ and rate $b$, having density
\[
 \frac{b^\beta}{\Gamma(\beta)}x^{\beta-1}e^{-bx},\qquad x>0,
 \qquad
 \Gamma(\beta)=\int_0^\infty x^{\beta-1}e^{-x}\,\dd x.
\]
We specify the shape of each gamma random variable through its distribution: if $G\sim\operatorname{Gamma}(\beta,1)$, then $G/b\sim\operatorname{Gamma}(\beta,b)$. Subscripts on gamma random variables label the variables, rather than their shape parameters.
The symbol $\perp$ denotes independence.

We use the integration notation introduced above also for unbounded integrable tests.
Expressions such as $F(y^2)$ and $F(|y|)$ stand for $\int y^2F(\dd y)$ and $\int |y|F(\dd y)$.
The point mass at $x$ is denoted by $\delta_x$, and $\mathbf1_A$ denotes the indicator of a set $A$.
Thus $T_*\nu(A)=\nu(T^{-1}(A))$ for a measurable map $T$.
We use $L_\nu(s)=\int e^{-sx}\nu(\dd x)$ for the Laplace transform of a probability on $[0,\infty)$; this is distinct from the law notation $\Law(X)$.

We write $\mathcal P(\R)$ for the Borel probability measures on $\R$ and $\Ptwo$ for those with finite second moment.
Weak, or narrow, convergence is denoted by $\Rightarrow$ and means convergence against bounded continuous functions.
Curves with values in $\Ptwo$ will be continuous for the narrow topology unless otherwise stated.
Unless specified otherwise, $L^p$ spaces on real intervals use Lebesgue measure.
We write $C_c^k(D)$ for the $k$-times continuously differentiable real functions with compact support in the open set $D$, and $\|f\|_\infty$ for the supremum norm on the indicated domain.
Finally, $r_+=r^+=\max\{r,0\}$, $r_-=r^-=\max\{-r,0\}$, and $\log_+x=\max\{\log x,0\}$ for $x>0$.

We also use the digamma function
\[
  \psi_0(r)=\frac{\Gamma'(r)}{\Gamma(r)},\qquad r>0.
\]

\subsection{Thorin measures}

A probability law \(\rho\) on \([0,\infty)\) belongs to \(\GGC\) if and only if its Laplace transform has the representation
\begin{equation}\label{eq:thorin-representation}
  L_\rho(s):=\int_{[0,\infty)}e^{-sx}\rho(\dd x)
  =\exp\left\{-as-\int_{(0,\infty)}
                  \log(1+s/b)\,U(\dd b)\right\},\qquad s\ge0,
\end{equation}
where \(a\ge0\) and \(U\) is a positive Borel measure satisfying
\begin{equation}\label{eq:thorin-admissibility}
  \int_{(0,\infty)}\log(1+1/b)\,U(\dd b)<\infty.
\end{equation}
The measure \(U\) is the rate-form Thorin measure and \(a\) is the drift.
We will first work with zero drift and finite Thorin mass; these are separate restrictions.
This is the Laplace-transform form of the representation in \cite[Section~3.1, p.~29, equations~(3.1.1)--(3.1.2)]{Bondesson1992}.
Condition \eqref{eq:thorin-admissibility} combines the usual integrability conditions at zero and infinity.
It also implies that \(U\) is finite on compact subintervals of \((0,\infty)\).

\begin{lemma}[Weak closure and finite-gamma approximation]
\label{lem:ggc-closure}
The class \(\GGC\) is closed under weak limits that are probability laws.
Every law in \(\GGC\) is the weak limit of laws of the form
\begin{equation}\label{eq:finite-gamma-approximation}
  \Law\left(\sum_{j=1}^{m_n}\frac{G_{n,j}}{b_{n,j}}\right),
  \qquad \beta_{n,j}>0,\quad b_{n,j}>0,
\end{equation}
where, for each $n$, the variables $G_{n,j}\sim\operatorname{Gamma}(\beta_{n,j},1)$, $1\le j\le m_n$, are independent.
Thus the approximating laws may be chosen with zero drift and finite atomic Thorin measures, even when the limiting law has positive drift or an infinite Thorin measure.
\end{lemma}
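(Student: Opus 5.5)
The plan is to prove the two assertions of Lemma~\ref{lem:ggc-closure} separately, working throughout with Laplace transforms. By the continuity theorem on $[0,\infty)$, a sequence of probability laws converges weakly to a probability law if and only if the Laplace transforms converge pointwise for $s\ge0$ (for $s>0$ together with tightness, or equivalently the limit being continuous at $0$ with value $1$).

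\textbf{Closure under weak limits.} Definition~\ref{def:ggc} makes $\GGC$ the weak closure of the set $\mathcal F$ of laws of finite independent gamma sums. Let $\rho_k\in\GGC$ with $\rho_k\Rightarrow\rho$. The weak topology on probability measures on $[0,\infty)$ is metrizable, for instance by the L\'evy--Prokhorov metric $d$. For each $k$, choose $\mu_k\in\mathcal F$ with $d(\mu_k,\rho_k)<1/k$. Then $\mu_k\Rightarrow\rho$, so $\rho\in\GGC$. This is a diagonal argument and needs nothing beyond metrizability.

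\textbf{Approximating laws with zero drift and atomic Thorin measure.} The definition only says that $\rho\in\GGC$ is a weak limit of finite gamma sums. Any finite sum of independent gamma variables, with arbitrary shapes and rates, is already of the form \eqref{eq:finite-gamma-approximation}, so the second assertion is essentially immediate from Definition~\ref{def:ggc}. The content of the final sentence is the remark that such sums have zero drift and finite atomic Thorin measure $\sum_j\beta_{n,j}\delta_{b_{n,j}}$. Their Laplace transform is $\prod_j(1+s/b_{n,j})^{-\beta_{n,j}}$, which is \eqref{eq:thorin-representation} with $a=0$ and $U=\sum_j\beta_{n,j}\delta_{b_{n,j}}$.

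\textbf{A constructive version via the Thorin representation.} For readers who take \eqref{eq:thorin-representation} as the definition, I would also give an explicit construction. Given $(a,U)$, build the approximants in three steps.
\begin{enumerate}
\item Truncate $U$ to $[1/n,n]$.
\item Discretize the truncated measure into finitely many atoms on a fine grid, putting mass $U(I)$ at the left endpoint of each cell $I$. On compacts $\log(1+s/b)$ is uniformly continuous in $b$, so the error is small. Local finiteness of $U$ follows from \eqref{eq:thorin-admissibility}.
\item Replace the drift $as$ by a gamma variable of shape $an$ and rate $n$. Its transform $(1+s/n)^{-an}$ tends to $e^{-as}$.
\end{enumerate}
The tail pieces are controlled by dominated convergence. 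On $(0,1/n)$ we have $\log(1+s/b)\le C_s\log(1+1/b)$. On $(n,\infty)$ we have $\log(1+s/b)\le \max(s,1)\log(1+1/b)$. In both cases \eqref{eq:thorin-admissibility} makes the contribution vanish as $n\to\infty$. Pointwise convergence of the transforms, with the limit continuous at $0$, then gives weak convergence.

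\textbf{Main obstacle.} The only delicate point is the continuity theorem for Laplace transforms and the uniform tail control near $b\to0$. The bound $\log(1+s/b)\le \max(1,s)\log(1+1/b)$ for all $b>0$ handles both tails simultaneously.
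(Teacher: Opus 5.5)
Your proof is correct, but it takes a different route from the paper. The paper only cites Bondesson's Theorem~3.1.5 for weak closure and his remark on p.~35 for approximation by zero-drift, finite-atomic Thorin measures. It adds one point: a single gamma variable with rate tending to infinity covers the target $\delta_0$.

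You instead observe that, with Definition~\ref{def:ggc} as the definition, both assertions are formal. Closure is a diagonal argument in a metric for narrow convergence. The approximation is the definition itself once $\operatorname{Gamma}(\beta,b)$ is written as $G/b$, and your transform computation supplies the zero-drift, atomic-Thorin-measure remark. This is more elementary and self-contained than the citation, and it handles $\delta_0$ automatically.

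Your constructive Thorin-side argument is a genuine bonus. It proves the direction ``representation \eqref{eq:thorin-representation} implies membership in $\GGC$ in the sense of Definition~\ref{def:ggc}''. That is exactly what Section~\ref{sec:completion} uses when it concludes $\Law(X^q)\in\GGC$ from the admissibility of $U_t$. Bondesson's theorem additionally gives the converse: every weak limit of finite gamma sums has a representation \eqref{eq:thorin-representation}, with convergence of the Thorin data. The lemma as stated does not need that converse.

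Two small repairs in the constructive part:
\begin{itemize}
\item When $a=0$ and $U=0$, your recipe produces the empty sum. Add a $\operatorname{Gamma}(1,n)$ term, as the paper does.
\item Tie the discretization mesh $\delta_n$ to $n$, for example with $n^2\delta_nU([1/n,n])\to0$. The map $b\mapsto\log(1+s/b)$ is $sn^2$-Lipschitz on $[1/n,n]$, so a single grid then works for every $s$.
\end{itemize}
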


\begin{proof}
The weak closure assertion is \cite[Theorem~3.1.5, pp.~34--35]{Bondesson1992}, applied to a limit that is a probability measure.
The last paragraph on p.~35 of the same reference states the approximation assertion explicitly, with approximating drifts zero and approximating Thorin measures having finitely many atoms.
The transform of a finite atomic measure \(U_n=\sum_j\beta_{n,j}\delta_{b_{n,j}}\) is the transform of the sum in \eqref{eq:finite-gamma-approximation}.
If the target law is \(\delta_0\), a single gamma variable with a rate tending to infinity also supplies the asserted approximation.
\end{proof}

For a law with representation \eqref{eq:thorin-representation}, differentiation at \(s>0\) gives
\begin{equation}\label{eq:ggc-log-derivative}
  -\partial_s\log L_\rho(s)
  =a+\int_{(0,\infty)}\frac{U(\dd b)}{s+b}.
\end{equation}
The differentiation follows from \eqref{eq:thorin-admissibility}, locally uniformly in positive \(s\).

\subsection{Dirichlet processes and gamma means}

The following convention for Dirichlet processes is the one used in \cite[Section~1, reprint p.~2]{James2005}.

\begin{definition}[Dirichlet processes]
\label{def:dirichlet-process}
For a nonzero finite measure \(V\) on a Borel space \(E\), \(P\sim\DP(V)\) means that \(P\) is a \emph{Dirichlet process}: it is a random probability measure whose masses on any measurable finite partition of \(E\) have the Dirichlet distribution with parameters equal to the corresponding \(V\)-masses.
For positive parameters \((\beta_1,\ldots,\beta_m)\), this Dirichlet distribution is the law of \((G_i/\sum_{j=1}^mG_j)_{i=1}^m\), where \(G_1,\ldots,G_m\) are independent and \(G_i\sim\operatorname{Gamma}(\beta_i,1)\).
A cell of zero base mass has zero random mass almost surely and is omitted from this vector.
We use this notation both for rates, with \(E=(0,\infty)\), and for log rates, with \(E=\R\).
The base measure may be atomic, nonatomic, or mixed.
\end{definition}

\begin{lemma}[Gamma--Dirichlet normalization and Markov--Krein identity]
\label{lem:gamma-dirichlet}
Let \(U\) be a nonzero finite positive measure on \((0,\infty)\), with total mass \(B\).
Let \(P\sim\DP(U)\), and let \(G\sim\operatorname{Gamma}(B,1)\) be independent of \(P\).
Then the random measure \(GP\) is the gamma process with shape measure \(U\) and unit rate: its masses on disjoint sets are independent gamma variables with shapes equal to their \(U\)-masses, and sets of zero \(U\)-mass receive zero mass.
In particular, if \(r\ge0\) is Borel measurable and
\[
  \int\log(1+r(b))\,U(\dd b)<\infty,
\]
then \(P(r):=\int r(b)P(\dd b)<\infty\) almost surely, and
\begin{equation}\label{eq:markov-krein}
  \E e^{-zGP(r)}
  =\E(1+zP(r))^{-B}
  =\exp\left\{-\int\log(1+zr(b))\,U(\dd b)\right\},
  \qquad z\ge0.
\end{equation}
Consequently, if \(U\) satisfies \eqref{eq:thorin-admissibility}, the zero-drift GGC \(X\) with Thorin measure \(U\) admits the representation
\begin{equation}\label{eq:gamma-dirichlet-untilted}
  X\ \overset{\mathrm d}=\ G\int b^{-1}P(\dd b).
\end{equation}
For \(s>0\), let \(X_s\) denote the exponential tilt of \(X\):
\[
  \Law(X_s)(\dd x)
  =\frac{e^{-sx}}{\E e^{-sX}}\,\Law(X)(\dd x).
\]
Then, with
\[
  M_P(s):=\int_{(0,\infty)}\frac{P(\dd b)}{s+b},
\]
one has
\begin{equation}\label{eq:gamma-dirichlet-tilt}
  X_s\ \overset{\mathrm d}=\ GM_P(s),\qquad G\ \text{independent of }P.
\end{equation}
The base law of \(P\) on the right is \(\DP(U)\) for every \(s\).
\end{lemma}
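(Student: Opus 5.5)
The plan is to work entirely with Laplace transforms and the gamma-process structure, proving the assertions of Lemma~\ref{lem:gamma-dirichlet} in the order they are stated.

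\textbf{Step 1: the gamma process $GP$.} I would first take a finite measurable partition $A_1,\dots,A_m$ of $(0,\infty)$, with $U(A_i)>0$. By Definition~\ref{def:dirichlet-process},
\[
(P(A_i))_i \overset{\mathrm d}= \Bigl(G_i\big/\textstyle\sum_j G_j\Bigr)_i,\qquad G_i\sim\operatorname{Gamma}(U(A_i),1)\ \text{independent}.
\]
Lukacs' theorem (the classical beta--gamma algebra) says that $\sum_j G_j\sim\operatorname{Gamma}(B,1)$ is independent of the normalized vector. Since $G$ is independent of $P$, this gives $(GP(A_i))_i\overset{\mathrm d}=(G_i)_i$. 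Cells with $U(A_i)=0$ get zero mass by the convention in the definition. Finite-dimensional distributions determine the law of a random measure on a Borel space, so $GP$ is the unit-rate gamma process with shape measure $U$.

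\textbf{Step 2: the Markov--Krein identity \eqref{eq:markov-krein}.} For a simple function $r=\sum_i r_i\mathbf 1_{A_i}\ge0$, independence of the gamma masses gives
\[
\E e^{-zGP(r)}=\prod_i(1+zr_i)^{-U(A_i)}.
\]
Conditioning on $P$ and using $\E e^{-\lambda G}=(1+\lambda)^{-B}$ gives the middle expression $\E(1+zP(r))^{-B}$. For general measurable $r\ge0$, I would approximate by simple $r_k\uparrow r$ and pass to the limit by monotone convergence on both sides. The limit of $\int\log(1+zr_k)\,dU$ is finite by the hypothesis, since $\log(1+zr)\le\log(1+r)+\log_+ z$ and $U$ is finite. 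Letting $z\downarrow0$ then gives $\Pr(GP(r)<\infty)=1$. Since $G>0$ almost surely, $P(r)<\infty$ almost surely.

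\textbf{Step 3: the untilted representation \eqref{eq:gamma-dirichlet-untilted}.} I would apply Step 2 with $r(b)=1/b$. The integrability hypothesis is then exactly \eqref{eq:thorin-admissibility}. The Laplace transform of $GP(1/b)$ matches \eqref{eq:thorin-representation} with $a=0$, and uniqueness of Laplace transforms on $[0,\infty)$ gives the identity in law.

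\textbf{Step 4: the tilt \eqref{eq:gamma-dirichlet-tilt}.} This is where the real content lies: the tilt must act only on $G$ and leave the law of $P$ unchanged. The argument goes through the Laplace transform,
\[
\E e^{-uX_s}=\frac{L(s+u)}{L(s)}=\exp\Bigl\{-\int\log\Bigl(1+\frac{u}{s+b}\Bigr)U(\dd b)\Bigr\},
\]
which uses the identity $\log(1+(s+u)/b)-\log(1+s/b)=\log(1+u/(s+b))$. I would then apply Step 2 with $r(b)=1/(s+b)$. This $r$ is bounded by $1/s$, so its integrability is trivial. The right side above is therefore $\E e^{-uGM_P(s)}$ with the same $P\sim\DP(U)$, and uniqueness of Laplace transforms concludes.

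The only delicate points are Lukacs' independence in Step 1 and the monotone limit in Step 2; both are standard.
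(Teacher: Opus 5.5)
Your proposal is correct and follows essentially the same route as the paper. It uses the finite-partition gamma/Dirichlet factorization (you cite Lukacs' independence where the paper does the change of variables $x_j=tp_j$ explicitly), then the Laplace functional on simple functions extended by monotone approximation, with $z\downarrow0$ giving almost-sure finiteness, and finally the choices $r(b)=b^{-1}$ and $r(b)=(s+b)^{-1}$ together with uniqueness of Laplace transforms.
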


\begin{proof}
The normalization, independence, logarithmic condition, and identity \eqref{eq:markov-krein} are recorded in \cite[Section~1, reprint p.~2, equations~(1)--(3)]{James2005}.
We recall the finite-partition argument to make explicit its extension to the unbounded functions used here.
For a finite partition with positive parameter vector \((\beta_1,\ldots,\beta_m)\), the change of variables \(x_j=tp_j\), with Jacobian \(t^{m-1}\), factors the joint density of independent unit-rate gamma variables into a \(\operatorname{Gamma}(B,1)\) density in \(t\) and a Dirichlet density in \((p_1,\ldots,p_m)\).
Conversely, multiplying a Dirichlet vector independent of \(G\) by \(G\) gives these independent gamma variables.
The gamma-process assertion follows by applying this calculation to every finite partition.

The Laplace functional for a nonnegative simple function is therefore the right side of \eqref{eq:markov-krein}.
Increasing simple approximation proves the same identity for an arbitrary nonnegative Borel function, initially allowing an infinite integral.
Under the stated logarithmic condition, the right side tends to one as \(z\downarrow0\), by dominated convergence with \(0\le z\le1\).
It follows that \(GP(r)\), and hence \(P(r)\), is finite almost surely.
Conditioning on \(P\) and integrating the gamma density gives the middle expression in \eqref{eq:markov-krein}.

Taking \(r(b)=b^{-1}\) proves \eqref{eq:gamma-dirichlet-untilted}.
For the tilted identity, \(r(b)=(s+b)^{-1}\) is bounded and
\begin{align*}
  \E e^{-vGM_P(s)}
  &=\exp\left\{-\int
       \log\left(1+\frac{v}{s+b}\right)U(\dd b)\right\}\\
  &=\frac{\E e^{-(s+v)X}}{\E e^{-sX}},\qquad v\ge0.
\end{align*}
Uniqueness of Laplace transforms proves \eqref{eq:gamma-dirichlet-tilt}.
The same auxiliary pair \((G,P)\) can thus represent each tilted marginal; only these marginal identities will be used.
\end{proof}

For later use, \(\operatorname{Beta}(1,B)\) denotes the distribution on \((0,1)\) with density \(B(1-z)^{B-1}\).
Sampling one point from a Dirichlet process adds a unit atom to its base measure in the posterior law.
The following form will turn random rate integrals into deterministic ones.

\begin{lemma}[One-observation posterior and Palm identity]
\label{lem:dirichlet-posterior}
Let \(U\) be a nonzero finite positive measure on \((0,\infty)\) with total mass \(B\).
Write \(\E_{\DP(V)}\) for expectation over a random probability measure \(P\sim\DP(V)\).
For every nonnegative jointly Borel function \(\Phi(b,P)\),
\begin{equation}\label{eq:dirichlet-palm}
  \E_{\DP(U)}\int\Phi(b,P)P(\dd b)
  =\frac1B\int U(\dd b)\,
          \E_{\DP(U+\delta_b)}\Phi(b,P).
\end{equation}
The identity also holds for a signed jointly Borel function whenever \(\E_{\DP(U)}\int|\Phi(b,P)|P(\dd b)<\infty\).
A jointly measurable realization of the posterior laws is
\begin{equation}\label{eq:palm-coupling}
  P^{(b)}=(1-Z)Q+Z\delta_b,\qquad
  Q\sim\DP(U),\quad Z\sim\operatorname{Beta}(1,B),
  \qquad Q\ \text{independent of }Z,
\end{equation}
for which \(P^{(b)}\sim\DP(U+\delta_b)\).
\end{lemma}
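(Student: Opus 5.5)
We will prove \eqref{eq:dirichlet-palm} first for product functions, then extend by a monotone class argument, and finally identify the coupling \eqref{eq:palm-coupling}.

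\textbf{Finite-partition case.} Fix a finite measurable partition $A_1,\ldots,A_m$ of $(0,\infty)$ with $\beta_i=U(A_i)>0$. Take $\Phi(b,P)=\mathbf1_{A_k}(b)\,h(P(A_1),\ldots,P(A_m))$ with $h\ge0$ Borel. The left side of \eqref{eq:dirichlet-palm} is then $\E[p_k h(p)]$, where $p\sim\operatorname{Dir}(\beta_1,\ldots,\beta_m)$. Multiplying the Dirichlet density by $p_k$ yields $(\beta_k/B)$ times the $\operatorname{Dir}(\beta+e_k)$ density. This follows from $\Gamma(\beta_k+1)=\beta_k\Gamma(\beta_k)$ and $\Gamma(B+1)=B\Gamma(B)$.

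For every $b\in A_k$, the masses of $\DP(U+\delta_b)$ on the partition are $\operatorname{Dir}(\beta+e_k)$. The right side of \eqref{eq:dirichlet-palm} is therefore
\[
B^{-1}\int_{A_k}U(\dd b)\,\E_{\operatorname{Dir}(\beta+e_k)}h=(\beta_k/B)\,\E_{\operatorname{Dir}(\beta+e_k)}h,
\]
which matches the left side. Cells with $\beta_i=0$ carry zero mass under both laws, except the cell containing $b$, which is $U$-null when integrated. Hence those cells are harmless.

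\textbf{Extension.} Both sides of \eqref{eq:dirichlet-palm} define measures in $\Phi$ on the product of $(0,\infty)$ with the space of probability measures. That space carries the $\sigma$-field generated by the evaluations $P(A)$. The functions $\mathbf1_C(b)\mathbf1_D(P)$ form a $\pi$-system generating the product $\sigma$-field, where $D$ is a cylinder set depending on finitely many masses. Refining partitions, every such function reduces to the case just treated. The two sides agree as finite measures on this $\pi$-system; the total mass is $1$ on both sides, taking $\Phi\equiv1$. Dynkin's theorem then gives agreement on indicators of all jointly measurable sets. Monotone convergence extends this to nonnegative $\Phi$. For signed $\Phi$, we apply the result to $\Phi^\pm$ and subtract. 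The integrability hypothesis, combined with the nonnegative identity, makes both sides finite.

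\textbf{Coupling.} We show $(1-Z)Q+Z\delta_b\sim\DP(U+\delta_b)$. Fix a partition and let $A_k\ni b$. Represent $Q$'s masses as $G_i/G$ with $G=\sum_i G_i$, where the $G_i\sim\operatorname{Gamma}(\beta_i,1)$ are independent. Represent $Z=E/(E+G)$ with $E\sim\operatorname{Gamma}(1,1)$ independent. By the gamma--Dirichlet factorization in Lemma~\ref{lem:gamma-dirichlet}, $G/(G+E)$ is independent of $(G_i/G)$ and of $G+E$. Hence $Z\sim\operatorname{Beta}(1,B)$ is independent of $Q$, as required. The resulting masses are
\[
\frac{G_i+E\,\mathbf1_{i=k}}{G+E},
\]
which is exactly $\operatorname{Dir}(\beta+e_k)$. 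Since the laws agree on all partitions, they are equal.

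The map $(b,Q,Z)\mapsto P^{(b)}$ is jointly measurable because each $P^{(b)}(A)=(1-Z)Q(A)+Z\mathbf1_A(b)$ is. The only mildly delicate point is the $\sigma$-field bookkeeping in the monotone class step; the rest is a direct computation.
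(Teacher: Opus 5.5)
Your proposal is correct and follows essentially the same route as the paper. Multiplying the Dirichlet density by its $k$-th coordinate gives $(\beta_k/B)\operatorname{Dir}(\beta+e_k)$ on a common partition, and refinement plus a monotone-class argument extends this to jointly Borel $\Phi$. The coupling is then verified by adding an independent $\operatorname{Gamma}(1,1)$ mass at $b$ to the gamma masses and renormalizing, which is the paper's gamma-process argument carried out at the level of partitions.

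You leave one bookkeeping point implicit, as the paper also does. The right side defines a measure only if $b\mapsto\E_{\DP(U+\delta_b)}\Phi(b,P)$ is measurable. This follows from your joint-measurability remark on $(b,Q,Z)\mapsto P^{(b)}$ via Fubini.
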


\begin{proof}
The posterior \(P\mid Y=b\), when \(Y\mid P\) has law \(P\), is \(\DP(U+\delta_b)\); see \cite[Section~2, reprint pp.~4--5, including equation~(8)]{James2005}.
We give a proof using finite partitions.
Multiplication of a Dirichlet density with parameter vector \((\beta_j)_j\) by its \(i\)-th coordinate changes the parameter to \((\beta_j+\mathbf1_{\{j=i\}})_j\), with multiplier \(\beta_i/B\).
The assertion is trivial for a partition with one positive-mass cell.
On a common finite partition this proves \eqref{eq:dirichlet-palm} for cylinder functions and simple functions of the sampled location.
Refinement and the monotone-class theorem extend the identity to the stated jointly Borel functions.
Zero-mass cells contribute nothing.
This proof does not assume that \(U\) is diffuse.

To obtain \eqref{eq:palm-coupling}, let \(G\sim\operatorname{Gamma}(B,1)\) be independent of \(Q\). Add a \(\operatorname{Gamma}(1,1)\) mass independent of \((G,Q)\) at \(b\) to the gamma process \(GQ\), and normalize its total mass.
The new shape measure is \(U+\delta_b\).
The fraction assigned by the added mass is \(\operatorname{Beta}(1,B)\), independently of \(Q\).
The displayed formula, as a function of \((b,Q,Z)\), is a Borel map into the space of probability measures endowed with its weak Borel sigma-field.
It therefore also specifies the required measurable posterior kernel.
\end{proof}

Atomic base measures are allowed throughout, so the identity applies in particular to finite gamma convolutions.

\begin{lemma}[Stick-breaking realization]\label{lem:stick-breaking}
Let \(B>0\), let \(H\) be a probability on \(\R\), let \((Y_j)_{j\ge1}\) be independent with law \(H\), and let \((V_j)_{j\ge1}\) be independent \(\operatorname{Beta}(1,B)\) variables, independently of the locations.
Then
\begin{equation}\label{eq:stick-breaking}
  Q=\sum_{j\ge1}W_j\delta_{Y_j},\qquad
  W_j=V_j\prod_{i<j}(1-V_i),
\end{equation}
is a probability measure almost surely and has law \(\DP(BH)\).
\end{lemma}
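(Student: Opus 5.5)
The plan is to prove the two assertions separately: first that the weights sum to one almost surely, and then that $Q$ has law $\DP(BH)$. For the first, note that
\[
  1-\sum_{j\le n}W_j=\prod_{i\le n}(1-V_i).
\]
The factors $1-V_i$ are i.i.d.\ in $(0,1)$ with $\E(1-V_i)=B/(B+1)<1$. Hence the expectation of the product is $(B/(B+1))^n\to0$. Since the product is monotone, it tends to zero almost surely. So $Q$ is a random probability measure, and it is Borel measurable as a map into $\mathcal P(\R)$, because each finite partial sum is.

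For the law, I will use the distributional fixed-point characterization. Set
\[
  Q'=\sum_{j\ge2}\frac{W_j}{1-V_1}\,\delta_{Y_j}.
\]
Then $Q'$ has the same construction as $Q$ built from the shifted sequences, so $Q'\overset{\mathrm d}=Q$, and $Q=V_1\delta_{Y_1}+(1-V_1)Q'$ with $V_1,Y_1,Q'$ independent.

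The first step is to show that $P\sim\DP(BH)$ satisfies the same equation. This is exactly what \eqref{eq:palm-coupling} of Lemma~\ref{lem:dirichlet-posterior} gives once it is integrated over $b\sim H$. On the left, \eqref{eq:dirichlet-palm} with $\Phi(b,P)=\phi(P)$ shows that
\[
  \E_{\DP(BH)}\phi(P)=\int H(\dd b)\,\E_{\DP(BH+\delta_b)}\phi(P).
\]
On the right, Lemma~\ref{lem:dirichlet-posterior} identifies $\DP(BH+\delta_b)$ with the law of $(1-Z)Q+Z\delta_b$. Since $BH$ has total mass $B$, $Z\sim\operatorname{Beta}(1,B)$, matching the law of $V_1$. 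Hence $P\overset{\mathrm d}=V_1\delta_{Y_1}+(1-V_1)P$, with $V_1$, $Y_1$ and $P$ independent.

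The second step, which is the main point, is uniqueness of the solution of this fixed-point equation among laws of random probability measures. I would argue by coupling. Take any two solutions $P$ and $P'$, and iterate the equation $n$ times using the same $(V_j,Y_j)_{j\le n}$. For a finite measurable partition $A_1,\dots,A_m$ this gives
\[
  \bigl|P(A_k)-P'(A_k)\bigr|\le\prod_{i\le n}(1-V_i)\,\bigl|\tilde P(A_k)-\tilde P'(A_k)\bigr|\le\prod_{i\le n}(1-V_i),
\]
where $\tilde P$ and $\tilde P'$ are the $n$-fold residual measures. The right side tends to zero almost surely. Therefore the finite-dimensional laws of $(P(A_k))_k$ and $(P'(A_k))_k$ coincide.

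Applying this with $P'=Q$ shows that the partition masses of $Q$ have the Dirichlet laws required by Definition~\ref{def:dirichlet-process}. The finite-dimensional distributions on partitions determine the law of a random measure on the weak Borel $\sigma$-field, so $Q\sim\DP(BH)$.

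I expect the only delicate point to be measurability and the determination of the law by its partition marginals. This is standard: the $\sigma$-field is generated by the evaluations $\mu\mapsto\mu(A)$, which reduces everything to a $\pi$-system argument.
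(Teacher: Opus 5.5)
Your proposal is correct. It shares two steps with the paper: the almost-sure vanishing of $\prod_{j\le n}(1-V_j)$, and the contraction argument in which two solutions driven by the same $(V_j,Y_j)$ differ by at most a multiple of that product on every partition cell.

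The difference is in how you show that the Dirichlet law satisfies the fixed-point equation. The paper stays at the level of one finite partition and checks $D\overset{\mathrm d}=Ve_J+(1-V)D'$ for a Dirichlet vector directly. Conditioning on $J=i$ raises the $i$-th parameter by one, which multiplies the Dirichlet density by $x_i/p_i$, and mixing over $i$ with weights $p_i$ restores the original density. You instead obtain the equation for the whole random measure. You integrate the posterior coupling \eqref{eq:palm-coupling} over $b\sim H$ and use \eqref{eq:dirichlet-palm} with $\Phi(b,P)=\phi(P)$, after the trivial transfer of Lemma~\ref{lem:dirichlet-posterior} from rates to log rates via $\exp$.

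What each route buys:
\begin{itemize}
\item Your route is shorter once Lemma~\ref{lem:dirichlet-posterior} is available and needs no density computation.
\item Your route does require a random measure $P\sim\DP(BH)$ to exist beforehand, so it identifies the law of $Q$ without proving that such a process exists. This is harmless here, because Lemmas~\ref{lem:gamma-dirichlet} and~\ref{lem:dirichlet-posterior} already take Dirichlet processes as given.
\item The paper's finite-dimensional argument never refers to a pre-existing process. The constructed $Q$ therefore itself supplies a $\DP(BH)$, which is the usual purpose of Sethuraman's construction.
\end{itemize}

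One point of presentation: the fixed-point relations hold only in distribution. Your displayed inequality should therefore be read for the $n$-fold iterated copies, which are equal in law to $P$ and $P'$ and are coupled through the common $(V_j,Y_j)_{j\le n}$. Equality of the partition laws then follows by letting $n\to\infty$.
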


\begin{proof}
This is Sethuraman's construction \cite[Section~2, pp.~642--643, equation~(2.1), and Theorem~3.4, p.~645]{Sethuraman1994}.
Note that the remaining mass after \(m\) terms is \(\prod_{j\le m}(1-V_j)\); it decreases to zero almost surely because its expectation is \((B/(B+1))^m\).

For a finite partition with probabilities \(p_i\), a Dirichlet vector \(D\) with parameters \(Bp_i\) satisfies
\[
  D\ \overset{\mathrm d}=\ V e_J+(1-V)D',
\]
where \(e_i\) is the \(i\)-th coordinate unit vector, \(D'\) has the same law as \(D\), \(V\sim\operatorname{Beta}(1,B)\), and \(\mathbb P(J=i)=p_i\); the variables \(D',V,J\) are mutually independent.
Indeed, conditional on \(J=i\), gamma addition gives the Dirichlet parameters \(Bp_j+\mathbf1_{\{j=i\}}\).
Its density relative to that of \(D\) is \(x_i/p_i\); mixing with weights \(p_i\) restores the original density.
Again zero-probability cells are omitted and the one-cell case is immediate.
This fixed point is unique: two initial probability vectors driven by the same \(V_j,J_j\) have \(\ell^1\) distance at most \(2\prod_{j\le m}(1-V_j)\) after \(m\) iterations.
The partition vector defined by \eqref{eq:stick-breaking} satisfies this fixed-point equation and hence is Dirichlet.
This proves the lemma on all finite partitions, including for atomic \(H\).
\end{proof}

When the parameters vary, these random measures can be realized on a single probability space.
The precise construction is given in Lemma~\ref{lem:parameterized-dirichlet} of Appendix~\ref{sec:measurable-realizations}.
It will be used both for measurability of the coefficients and for their weak continuity.

\subsection{The bounded Stieltjes phase}

The phase representation below is the analytic input that makes the jump kernel nonnegative.
We use the convention for Stieltjes functions in \cite[Chapter~2]{SSV2010}.

\begin{definition}[Stieltjes functions]
\label{def:stieltjes}
A Stieltjes function is a function \(S:(0,\infty)\to[0,\infty)\) with a representation
\[
  S(s)=\frac{c}{s}+d+\int_{(0,\infty)}
                         \frac{\sigma(\dd t)}{s+t},
  \qquad c,d\ge0,\qquad
  \int\frac{\sigma(\dd t)}{1+t}<\infty,
\]
where \(\sigma\) is a positive Borel measure.
\end{definition}

The bounded phase representation used below is a consequence of two results in \citet[Theorems~6.10 and~7.3]{SSV2010}: if \(S\) is a nonzero Stieltjes function, its reciprocal \(f=1/S\) has the representation
\begin{equation}\label{eq:ssv-exponential}
  f(s)=\exp\left\{c_0+\int_0^\infty
     \left(\frac{t}{1+t^2}-\frac1{s+t}\right)
                           \xi(t)\,\dd t\right\},
  \qquad 0\le\xi\le1,
\end{equation}
with \(c_0\in\R\) and \(\xi\) unique up to Lebesgue-null sets \cite[Theorem~6.10, pp.~58--59, and Theorem~7.3, p.~63]{SSV2010}.
In the terminology of that reference, \(1/S\) is a complete Bernstein function.
We need only the displayed representation and its uniqueness.
In particular, they apply to the reciprocal of every resolvent mean considered below.

\begin{lemma}[Measurable phase and anchor-one representation]
\label{lem:phase}
For every probability measure \(P\) on \((0,\infty)\), extend \(M_P\) to the slit plane by
\[
  M_P(z)=\int\frac{P(\dd b)}{z+b},
  \qquad z\in\mathbb C\setminus(-\infty,0].
\]
There is a jointly Borel function \((P,t)\mapsto\xi_P(t)\in[0,1]\) such that
\begin{equation}\label{eq:phase-anchor}
  \log M_P(s)-\log M_P(1)
  =\int_0^\infty\xi_P(t)
       \left(\frac1{s+t}-\frac1{1+t}\right)\dd t,
  \qquad s>0.
\end{equation}
Here the probability measures \(P\) carry the Borel sigma-field of the narrow topology.
For each fixed \(P\), the density in this representation is unique Lebesgue-almost everywhere.
A specific jointly Borel choice is
\begin{equation}\label{eq:phase-representative}
  \xi_P(t)=\limsup_{n\to\infty}
       \frac{\arg(1/M_P(-t+i/n))}{\pi}
       =\limsup_{n\to\infty}
       \frac{-\arg M_P(-t+i/n)}{\pi},\qquad t>0,
\end{equation}
where \(n\) runs through the positive integers and \(\arg\) is the principal argument.
The ordinary limit in \eqref{eq:phase-representative} exists for almost every \(t\).
Both \eqref{eq:phase-anchor} and all its positive-order \(s\)-derivatives are absolutely convergent, locally uniformly in \(s>0\), uniformly over \(P\).
\end{lemma}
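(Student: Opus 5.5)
We apply the results of \citet{SSV2010} recalled in \eqref{eq:ssv-exponential} to the Stieltjes function $S=M_P$. It has the form of Definition~\ref{def:stieltjes} with $c=d=0$ and $\sigma=P$, and it is nonzero. Its reciprocal $f=1/M_P$ therefore has the representation \eqref{eq:ssv-exponential} with some $0\le\xi\le1$ and $c_0\in\R$.

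\textbf{Anchor-one identity.} Evaluate $\log f$ at $s$ and at $1$ and subtract. The constant $c_0$ and the terms $t/(1+t^2)$ cancel, and we obtain
\[
 \log M_P(s)-\log M_P(1)=\int_0^\infty \xi(t)\Bigl(\frac1{s+t}-\frac1{1+t}\Bigr)\dd t .
\]
This integral converges absolutely, since $\bigl|\frac1{s+t}-\frac1{1+t}\bigr|=\frac{|1-s|}{(s+t)(1+t)}$ is integrable on $(0,\infty)$. This gives \eqref{eq:phase-anchor}.

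\textbf{Uniqueness.} If two densities $\xi,\xi'$ with values in $[0,1]$ both satisfy \eqref{eq:phase-anchor}, differentiate once in $s$. This gives $\int(\xi-\xi')(t)(s+t)^{-2}\dd t=0$ for all $s>0$, so the Stieltjes transform of the bounded, $(1+t)^{-2}$-integrable function $\xi-\xi'$ vanishes identically. Uniqueness of the Stieltjes transform, or the uniqueness clause of \cite{SSV2010}, then gives $\xi=\xi'$ almost everywhere.

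\textbf{Boundary-value formula.} The representation \eqref{eq:phase-anchor} extends holomorphically to $z\in\mathbb C\setminus(-\infty,0]$. The imaginary part of $\log M_P(z)$ then equals the Poisson-type integral
\[
 -\int\xi(t)\,\frac{\operatorname{Im} z}{|z+t|^2}\,\dd t
\]
(up to the sign convention). Hence $-\arg M_P(-t+i/n)/\pi$ is the Poisson integral of $\xi$ at height $1/n$. The principal branch is the correct one, because $M_P$ maps the upper half-plane into the lower half-plane, so $\arg M_P\in(-\pi,0)$ and no branch ambiguity arises. By Fatou's theorem for Poisson integrals of bounded functions, this converges to $\xi(t)$ at every Lebesgue point, so the limit exists almost everywhere. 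We then \emph{define} $\xi_P$ by the limsup in \eqref{eq:phase-representative}. It agrees with the SSV density almost everywhere and is therefore a valid representative.

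\textbf{Joint measurability.} For fixed $z$, the map $P\mapsto M_P(z)=\int(z+b)^{-1}P(\dd b)$ is narrowly continuous, because the integrand is bounded and continuous on $(0,\infty)$. The map $(P,t)\mapsto M_P(-t+i/n)$ is then jointly continuous, by equicontinuity of the integrands in $t$ at fixed height $1/n$. Composing with the principal argument, which is continuous off $(-\infty,0]$ and hence on the lower half-plane, gives jointly Borel functions. A countable limsup preserves joint measurability, and clipping to $[0,1]$ (harmless, since the values already lie there) completes this step.

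\textbf{Uniform derivative bounds.} For $k\ge1$, the $k$-th $s$-derivative of the integrand is $(-1)^k k!\,\xi_P(t)(s+t)^{-k-1}$. Since $0\le\xi_P\le1$, it is dominated by $k!(a+t)^{-k-1}$ for $s\ge a>0$, and this bound is integrable and independent of $P$. For $k=0$, the bound $|1-s|/((a+t)(1+t))$ works on compact $s$-intervals. Dominated convergence then justifies differentiation under the integral, with absolute convergence locally uniformly in $s$ and uniformly in $P$.

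\textbf{Main obstacle.} The step I expect to require the most care is identifying the limsup representative with the SSV density. This requires checking the sign and branch conventions relating $\arg(1/M_P)$ to the Poisson integral of $\xi$, and invoking Fatou's boundary theorem correctly.
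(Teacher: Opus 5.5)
Your proposal is correct and follows essentially the same route as the paper: apply the SSV representation to $1/M_P$ and subtract at $s$ and $1$; use analytic continuation together with $\operatorname{Im}M_P<0$ on the upper half-plane to identify $-\arg M_P(-t+i/n)/\pi$ with the Poisson integral of the phase extended by zero; recover the phase at Lebesgue points; get joint measurability from joint continuity of $(P,t)\mapsto M_P(-t+i/n)$; and dominate all $s$-derivatives by $P$-independent integrable majorants. The only difference is uniqueness, where the paper simply invokes the SSV uniqueness clause (the anchor fixes $c_0$). Your differentiated route also works, but what vanishes there is the order-two transform $\int(\xi-\xi')(t)(s+t)^{-2}\,\dd t$, which you can invert via $(s+t)^{-2}=\int_0^\infty xe^{-(s+t)x}\,\dd x$ and Laplace uniqueness; it is not the ordinary Stieltjes transform, which need not converge for a bounded density.
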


The representation follows by subtracting the logarithmic formula \eqref{eq:ssv-exponential} at $s$ and at $1$.
The measurable boundary version and the absolute convergence assertions are proved in Appendix~\ref{sec:phase-proof}.

Normalization at \(1\) is useful here because it requires neither \(M_P(0+)<\infty\) nor a logarithmic moment of the large rates.
The extra assumption for the zero-anchored variant in \cite[Remark~6.11, p.~60]{SSV2010} is therefore not used.
All references to \cite{SSV2010} use the 2010 first edition with the authors' posted corrections.

\section{Power differentiation and a positive generator}
\label{sec:tangent}

We first differentiate the power parameter and then express the result as the action of a positive operator on logarithmic rates.
The gamma--Dirichlet representation supplies the derivative; the posterior identity and the bounded phase formula supply the operator.

\subsection{Differentiating the power parameter}

Throughout this subsection $U$ is a positive measure on $(0,\infty)$ with
\begin{equation}
 0<B=U((0,\infty))<\infty,\qquad
 \int\log(1+b^{-1})\,U(\dd b)<\infty.
 \label{eq:finite-admissible}
\end{equation}
Let $X$ have the zero-drift GGC law with Thorin measure $U$.
Thus
\[
 L(s)=\E e^{-sX}
 =\exp\left\{-\int\log(1+s/b)\,U(\dd b)\right\}.
\]
For $s>0$, let $X_s$ have the exponential tilt of $X$ defined in Lemma~\ref{lem:gamma-dirichlet}.
Recall the digamma function $\psi_0=\Gamma'/\Gamma$.
For a probability measure $P$ on $(0,\infty)$, put
\begin{equation}
 M_P(s)=\int\frac{P(\dd b)}{s+b},\qquad
 W_P(s)=M_P(s)+sM_P'(s)=\int\frac{b\,P(\dd b)}{(s+b)^2}.
 \label{eq:MW}
\end{equation}

\begin{lemma}[The power derivative]
\label{lem:tangent}
Let $U$, $B$, and $X$ be as above.
Let $P\sim\DP(U)$ and $G\sim\operatorname{Gamma}(B,1)$ be independent.
Then, for $s>0$,
\begin{equation}
 X_s\overset d=GM_P(s).
 \label{eq:tilt-gamma-dirichlet}
\end{equation}
For $s,q>0$, define
\[
 g_q(s)=-\partial_s\log\E e^{-sX^q},\qquad
 h_U(s)=\left.\partial_q g_q(s)\right|_{q=1},\qquad g(s)=g_1(s).
\]
The derivatives defining $g_q$ and $h_U$ exist, and expectations on the right-hand sides below are with respect to $P\sim\DP(U)$:
\begin{align}
 g(s)&=B\E M_P(s),\label{eq:g-dirichlet}\\
 h_U(s)&=B\E\left[
 W_P(s)\{\psi_0(B+1)+\log M_P(s)\}+sM_P'(s)
 \right],\label{eq:tangent}\\
 \frac{h_U(s)+g(s)}B
 &=\E\left[W_P(s)\{\psi_0(B+1)+1+\log M_P(s)\}\right].
 \label{eq:normalized-tangent}
\end{align}
Equivalently, $h_U(s)=\partial_s\{s\E[X_s\log X_s]\}$.
\end{lemma}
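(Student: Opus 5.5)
The plan is to prove the identities in the order they are stated. The intermediate formula $h_U(s)=\partial_s\{s\E[X_s\log X_s]\}$ will serve as the bridge between the analytic definition of $h_U$ and the Dirichlet expectations.

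\textbf{Tilt identity and $g$.} Equation \eqref{eq:tilt-gamma-dirichlet} is exactly \eqref{eq:gamma-dirichlet-tilt} of Lemma~\ref{lem:gamma-dirichlet}. For \eqref{eq:g-dirichlet}, I use \eqref{eq:ggc-log-derivative} with $a=0$, which gives $g(s)=\int U(\dd b)/(s+b)$. Since $\E P=U/B$ for $P\sim\DP(U)$ (apply Definition~\ref{def:dirichlet-process} to a two-cell partition and extend by monotone class), Fubini gives $\int U(\dd b)/(s+b)=B\E M_P(s)$.

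\textbf{The formula $h_U(s)=\partial_s\{s\E[X_s\log X_s]\}$.} I write
\[
g_q(s)=\frac{\E[X^qe^{-sX^q}]}{\E e^{-sX^q}},
\]
and differentiate in $q$ at $q=1$ under the expectation. For $q\in[1/2,2]$ and fixed $s>0$, the $q$-derivatives $X^q\log X\,e^{-sX^q}(1-sX^q)$ and $-sX^q\log X\,e^{-sX^q}$ are bounded uniformly in $X\ge0$ (including the region near $0$), so dominated convergence applies. Dividing by $L(s)$ and using the tilt, I get
\[
h_U=\E[X_s\log X_s]-s\E[X_s^2\log X_s]+s\E X_s\,\E[X_s\log X_s].
\]
Next, for a bounded-growth test $\varphi$ I use the tilt derivative $\partial_s\E\varphi(X_s)=-\E[X_s\varphi(X_s)]+\E X_s\,\E\varphi(X_s)$, justified by the same domination on compact $s$-intervals in $(0,\infty)$. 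With $\varphi(x)=x\log x$ this shows that the last two terms equal $s\,\partial_s\E[X_s\log X_s]$, which yields $h_U(s)=\partial_s\{s\E[X_s\log X_s]\}$.

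\textbf{Dirichlet form of $\E[X_s\log X_s]$ and of $h_U$.} By \eqref{eq:tilt-gamma-dirichlet} and the independence of $G$ and $P$,
\[
\E[X_s\log X_s]=\E[G\log G]\,\E M_P(s)+B\,\E[M_P(s)\log M_P(s)].
\]
Size-biasing $\operatorname{Gamma}(B,1)$ gives $\operatorname{Gamma}(B+1,1)$, so $\E[G\log G]=B\psi_0(B+1)$. Hence
\[
s\E[X_s\log X_s]=B\,\E\bigl[sM_P(s)\{\psi_0(B+1)+\log M_P(s)\}\bigr].
\]
Differentiating inside the $P$-expectation uses $\partial_s(sM_P)=W_P$ and $\partial_s\log M_P=M_P'/M_P$, which gives $W_P\{\psi_0(B+1)+\log M_P\}+sM_P'$. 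This is \eqref{eq:tangent}.

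Finally, adding $g/B=\E M_P$ and using $M_P+sM_P'=W_P$ gives \eqref{eq:normalized-tangent}.

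\textbf{Main obstacle.} The delicate step is justifying differentiation under $\E_{\DP(U)}$, because $M_P(s)$ can be arbitrarily small and $\log M_P$ is then unbounded. I would handle this with three pathwise bounds, valid for all $P$ and $s$ in a compact interval $[s_0,s_1]\subset(0,\infty)$:
\begin{enumerate}
\item $0<M_P\le 1/s_0$;
\item $|sM_P'|\le M_P$;
\item $|M_P'/M_P|\le 1/s$.
\end{enumerate}
The $s$-derivative of $sM_P\log M_P$ equals $W_P\log M_P+sM_P'$. This is bounded by $2M_P|\log M_P|+M_P$, and $x|\log x|$ is bounded on $(0,1/s_0]$. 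The derivative is therefore dominated by a constant uniformly in $P$, so the interchange follows by the mean value theorem and dominated convergence. The same bounds, with the fixed constant $\psi_0(B+1)$, also give integrability of every term in \eqref{eq:tangent} and \eqref{eq:normalized-tangent}.
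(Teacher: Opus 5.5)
Your proposal is correct and follows essentially the same route as the paper. Both arguments use the tilt representation, the identity $\E[G\log G]=B\psi_0(B+1)$, and differentiation under $\E_{\DP(U)}$ justified by the pathwise bounds $M_P(s)\le 1/s$, $|sM_P'(s)|\le M_P(s)$ and boundedness of $x|\log x|$ on $(0,1/s_0]$. The only cosmetic difference is that you compute $\partial_q g_q|_{q=1}$ directly and then match it to $\partial_s\{s\E[X_s\log X_s]\}$ via the tilt derivative (the expanded form the paper itself uses later in \eqref{eq:id-tangent-expanded}), whereas the paper differentiates $\log\E e^{-sX^q}$ in $q$ first and implicitly exchanges the mixed partials.
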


\begin{proof}
The tilted representation \eqref{eq:tilt-gamma-dirichlet} is \eqref{eq:gamma-dirichlet-tilt}.
We use it as an equality of distributions for each fixed $s$.

For $q$ in a compact subinterval of $(0,\infty)$, and $s$ in a compact positive interval, differentiating $e^{-s x^q}$ in $s$ and $q$ produces finite sums of bounded functions of $z=x^q$ of the form $z^r(\log z)^j e^{-sz}$ with $r>0$, multiplied by bounded powers of $q^{-1}$.
At $x=0$ these terms have the continuous value zero.
Differentiation under the original probability is therefore justified without a moment assumption on $X$.
Differentiating first in $q$ yields
\[
 \left.\partial_q\log\E e^{-sX^q}\right|_{q=1}
 =-s\E[X_s\log X_s],
\]
and hence $h_U(s)=\partial_s\{s\E[X_s\log X_s]\}$.

To evaluate the tilted moment, we differentiate the gamma integral with respect to its positive shape parameter and obtain $\E[G\log G]=B\psi_0(B+1)$.
Consequently
\[
 \E[X_s\log X_s]
 =B\E\left[M_P(s)\{\psi_0(B+1)+\log M_P(s)\}\right].
\]
It remains to differentiate this expression in $s$.
For every $P$ and integer $j\ge1$,
\[
 0<W_P(s)\leq M_P(s)\leq s^{-1},\qquad
 |M_P^{(j)}(s)|\leq j!s^{-j}M_P(s).
\]
On compact positive $s$-intervals, the needed derivatives of $M_P(s)\log M_P(s)$ are bounded by a constant times $M_P(s)(1+|\log M_P(s)|)$, uniformly bounded for $0<M_P(s)\leq s^{-1}$.
Thus we may differentiate this last expectation, which gives \eqref{eq:tangent}.
Equation~\eqref{eq:g-dirichlet} follows either from the Thorin representation or from the mean of the tilted law.
Adding it to \eqref{eq:tangent} and using $W_P=M_P+sM_P'$ proves \eqref{eq:normalized-tangent}.
\end{proof}

The lemma applies to any current zero-drift GGC law with a finite Thorin measure.
This is what will allow us to use the same identity at every time in the evolution.

\subsection{The operator on logarithmic rates}
\label{sec:generator}

We now represent the right-hand side of \eqref{eq:normalized-tangent} by an operator on the logarithmic rate variable.
The construction will apply to every base probability, and its estimates will be independent of the support of that probability.

Let $B>0$ and let $F$ be any probability on $\R$.
Set $U=B\exp_*F$.
The coefficients below are defined even when $U$ fails the Thorin integrability condition; admissibility will follow from the moment bound along the evolution.
For $b=e^y$ use the posterior probability
\[
 P^{(b)}=(1-Z)Q+Z\delta_b,\qquad
 Q\sim\DP(U),\quad Z\sim\operatorname{Beta}(1,B),\quad Q\perp Z,
\]
from \eqref{eq:palm-coupling}.
For a measurable function $A$ of a probability measure, write
\[
 \E^{(b)}A(P):=\E[A(P^{(b)})]
             =\E_{\DP(U+\delta_b)}A(P)
\]
whenever the expectation exists.
Let $\xi_P$ be the jointly measurable phase in Lemma~\ref{lem:phase}.
The jump measure and drift are obtained from the following four quantities:
\begin{align}
 \nu_0(\dd v)&=\frac{e^v}{(e^v-1)^2}\,\dd v
             =\frac{\dd v}{4\sinh^2(v/2)},\qquad v\ne0,
             \label{eq:universal-jumps}\\
 K(u)&=\frac1{1+u}-\frac1{u-1}+\frac{\log u}{(u-1)^2},
       \quad u\ne1,\qquad K(1)=0,\label{eq:K}\\
 k_{B,F}(y,v)&=\E^{(b)}\xi_P(be^v),\label{eq:acceptance}\\
 a_{B,F}(y)&=y-\psi_0(B+1)-1
 +\E^{(b)}\left[-\log\{bM_P(b)\}
                +\int_0^\infty\xi_P(bu)K(u)\,\dd u\right].
 \label{eq:log-drift}
\end{align}
Here $u>0$, $y,v\in\R$, and $b=e^y$; the measure $\nu_0$ has no atom at zero.
The function $K$ is a correction term that will cancel the logarithmic compensation on resolvent tests.
Its three terms are integrated together, and its singularity at one is removable.
The bound $0\le\xi_P\le1$ makes $k_{B,F}$ an acceptance factor for the reference measure $\nu_0$.
Although this measure has infinite mass near zero, its second moment is finite.

\begin{lemma}[Uniform coefficient bounds]
\label{lem:bounds}
The coefficients are finite and Borel measurable, with $0\leq k_{B,F}\leq1$.
Moreover
\begin{align}
 \int_0^1 K(u)\,\dd u&=-(1-\log2),&
 \int_1^\infty K(u)\,\dd u&=1-\log2,\label{eq:K-integrals}\\
 \int_{\R}v^2\nu_0(\dd v)&=4\sum_{n=1}^\infty n^{-2}
 =:m_2<\infty,\label{eq:m2}\\
 y-\psi_0(B+1)-2+\log2
 &\leq a_{B,F}(y)\leq y-\psi_0(1).
 \label{eq:drift-bounds}
\end{align}
In particular $|a_{B,F}(y)-y|\leq C_B$, where $C_B$ can be chosen uniformly bounded on compact subintervals of $0<B<\infty$, independently of $F$.

For every twice continuously differentiable $\varphi$ with bounded second derivative, define the absolutely convergent integral
\begin{equation}
 \mathcal G_{B,F}\varphi(y)
 =a_{B,F}(y)\varphi'(y)
 +\int_{\R}\{
   \varphi(y+v)-\varphi(y)-v\varphi'(y)
   \}k_{B,F}(y,v)\nu_0(\dd v).
 \label{eq:generator}
\end{equation}
It satisfies
\begin{equation}
 |\mathcal G_{B,F}\varphi(y)|
 \leq (|y|+C_B)|\varphi'(y)|
      +\tfrac12m_2\|\varphi''\|_\infty.
 \label{eq:generator-bound}
\end{equation}
For the identity function $\iota(y)=y$ and its square, the values are
\begin{align}
 \mathcal G_{B,F}\iota(y)&=a_{B,F}(y),\label{eq:linear-test}\\
 \mathcal G_{B,F}(\iota^2)(y)&=2ya_{B,F}(y)
          +\int v^2k_{B,F}(y,v)\nu_0(\dd v),\label{eq:quadratic-test}\\
 |\mathcal G_{B,F}(\iota^2)(y)|&\leq3y^2+C_B^2+m_2.
 \label{eq:quadratic-bound}
\end{align}
\end{lemma}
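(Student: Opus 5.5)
The plan is to treat the statement as a collection of explicit calculus identities plus two elementary inequalities. The inequalities are $0\le\xi_P\le1$ from Lemma~\ref{lem:phase} and $Z/2\le bM_{P^{(b)}}(b)\le1$ from the posterior coupling \eqref{eq:palm-coupling}.

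\emph{Measurability and $0\le k_{B,F}\le1$.} The phase $(P,t)\mapsto\xi_P(t)$ is jointly Borel and takes values in $[0,1]$. The posterior $P^{(b)}=(1-Z)Q+Z\delta_b$ is a Borel function of $(b,Q,Z)$, and the parameterized realization of Lemma~\ref{lem:parameterized-dirichlet} makes $Q$ depend measurably on $(B,F)$. Hence $\xi_{P^{(b)}}(be^v)$ is jointly Borel in all variables. Tonelli then gives Borel measurability of $k_{B,F}$ and the bound $0\le k_{B,F}\le1$. The same argument handles the drift, once integrability is established below.

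\emph{The explicit integrals.} For $K$, I would use the antiderivative
\[
\mathcal K(u)=\log(1+u)-\log u-\frac{\log u}{u-1}.
\]
It comes from integrating $\log u/(u-1)^2$ by parts against $-1/(u-1)$, which yields $\int\frac{\dd u}{u(u-1)}=\log|u-1|-\log u$. The function $\mathcal K$ extends continuously to $u=1$ with value $\log2-1$. As $u\downarrow0$ it equals $\log(1+u)-\frac{u\log u}{u-1}\to0$, and it tends to $0$ as $u\to\infty$. This gives \eqref{eq:K-integrals}.

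The same expansions show that $K<0$ on $(0,1)$ and $K>0$ on $(1,\infty)$. Near $0$ we have $K\sim\log u$; near $\infty$ we have $K\sim(\log u)/u^2$; and near $1$ the singularity is removable. (The sign change should be checked by writing $K$ over a common denominator.) Consequently $K\in L^1(0,\infty)$, and for any $\xi$ with values in $[0,1]$,
\[
-(1-\log 2)=\int_0^1K\le\int_0^\infty\xi K\le\int_1^\infty K=1-\log2 .
\]
For $m_2$, I would use symmetry and expand $e^{-v}(1-e^{-v})^{-2}=\sum_n ne^{-nv}$ for $v>0$. This gives $2\sum_n n\cdot2/n^3=4\sum n^{-2}$.

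\emph{Drift bounds.} Put $P=P^{(b)}$. Since $b/(b+c)\le1$, we have $bM_P(b)\le1$, so $-\log\{bM_P(b)\}\ge0$. The atom at $b$ contributes $Z\cdot\tfrac12$, so $-\log\{bM_P(b)\}\le\log2-\log Z$. For $Z\sim\operatorname{Beta}(1,B)$ one has $\E\log Z=\psi_0(1)-\psi_0(B+1)$, which is finite. This shows the expectation in \eqref{eq:log-drift} is finite. Combining it with the bounds on $\int\xi K$ gives
\[
a_{B,F}(y)\le y-\psi_0(B+1)-1+\log2+\psi_0(B+1)-\psi_0(1)+1-\log2=y-\psi_0(1),
\]
and likewise the lower bound $y-\psi_0(B+1)-2+\log2$. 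Because $\psi_0$ is continuous on $[1,\infty)$, the constant $C_B=\max\{|\psi_0(1)|,|\psi_0(B+1)|+2\}$ is locally bounded in $B$ and does not depend on $F$.

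\emph{Generator estimates.} Taylor's theorem gives $|\varphi(y+v)-\varphi(y)-v\varphi'(y)|\le\tfrac12v^2\|\varphi''\|_\infty$. With $k\le1$ and \eqref{eq:m2}, this yields absolute convergence and \eqref{eq:generator-bound}. For $\varphi=\iota$ the integrand vanishes, which gives \eqref{eq:linear-test}. For $\varphi=\iota^2$ the integrand equals $v^2k$, which gives \eqref{eq:quadratic-test}. Finally,
\[
2|y||a_{B,F}(y)|\le2y^2+2C_B|y|\le3y^2+C_B^2,
\]
and adding $m_2$ gives \eqref{eq:quadratic-bound}.

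The main obstacle is bookkeeping rather than ideas. One must verify the sign pattern of $K$ and its integrability, so that the $[0,1]$ bound on $\xi$ can be used monotonically. One must also justify joint measurability in $(B,F,y,v)$ through the appendix realization.
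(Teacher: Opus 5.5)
Your proposal is correct and follows the paper's proof essentially step for step. It uses the same antiderivative (yours equals the paper's $H(u)=\log(1+u^{-1})-\log u/(u-1)$), the same series for $\nu_0$, the same posterior-atom bound $Z/2\le bM_{P^{(b)}}(b)\le1$ with the beta log-moment, the same Taylor remainder estimate, and measurability through the appendix realization. The one step you left open, the sign pattern of $K$, is settled in the paper along the line you suggest: $(u-1)^2K(u)=\log u-2(u-1)/(u+1)$ vanishes at $u=1$ and has derivative $(u-1)^2/[u(u+1)^2]\ge0$, so $K$ is negative on $(0,1)$ and positive on $(1,\infty)$.
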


\begin{proof}
We begin with the two deterministic kernels.
Multiplying by $(u-1)^2$, we obtain
\[
 (u-1)^2K(u)=\log u-\frac{2(u-1)}{u+1}.
\]
The derivative of the right-hand side is $(u-1)^2/[u(u+1)^2]$.
The right-hand side itself vanishes at $u=1$, is negative below one and positive above one.
The apparent singularity of $K$ is removable, with value zero.
An antiderivative is
\[
 H(u)=\log(1+u^{-1})-\frac{\log u}{u-1},\qquad
 H(0+)=H(+\infty)=0,\quad H(1)=\log2-1.
\]
It follows that \eqref{eq:K-integrals} holds and that $\int|K|=2(1-\log2)$.
The density of $\nu_0$ is even, and on $v>0$ it equals $\sum_{n\geq1}ne^{-nv}$.
Tonelli's theorem gives \eqref{eq:m2}, since $\int_0^\infty v^2e^{-nv}\dd v=2/n^3$.
Only the finiteness of this series is needed in the proof.

The uniform drift bound comes from the atom added at the distinguished rate $b$.
Indeed, the posterior representation gives
\[
 Z/2\leq bM_{P^{(b)}}(b)\leq1.
\]
Differentiating the beta integral gives
\[
 0\leq\E^{(b)}[-\log\{bM_P(b)\}]
 \leq\log2+\E[-\log Z]
 =\log2+\psi_0(B+1)-\psi_0(1).
\]
The differentiation is legitimate because $|\log z|(1-z)^{B-1}$ is integrable on $(0,1)$.
Since $0\leq\xi_P\leq1$, its integral against $K$ lies in $[-(1-\log2),1-\log2]$.
Substituting these bounds into \eqref{eq:log-drift}, we obtain \eqref{eq:drift-bounds}.
For example, take
\[
 C_B=\max\{|\psi_0(1)|,
                 |\psi_0(B+1)+2-\log2|\}.
\]
Lemma~\ref{lem:parameterized-dirichlet}, followed by the rate pushforward, realizes all posterior probabilities on one fixed probability space as a jointly Borel function of $(B,F,y)$ and the auxiliary randomness.
Together with Lemma~\ref{lem:phase}, this makes the phase integrands jointly Borel, also in $v$ or $u$ as appropriate.
Integration over that fixed space preserves measurability.
The logarithmic expectation is finite by the displayed beta bound, and the phase correction is absolutely bounded by $\|K\|_1$; their positive and negative parts may therefore be integrated separately.
Thus $a_{B,F}(y)$ and $k_{B,F}(y,v)$ are jointly Borel in all their displayed parameters, as well as finite.

For the operator itself, Taylor's formula bounds the jump remainder in absolute value by $\frac12\|\varphi''\|_\infty v^2$, proving \eqref{eq:generator-bound} and absolute convergence.
Linear and quadratic remainders are respectively zero and $v^2$, proving the remaining assertions by $2|y|(|y|+C_B)\leq3y^2+C_B^2$.
\end{proof}

\subsection{Action on the resolvent tests}

The choice of the correction term becomes transparent on the resolvents $\varphi_s(y)=(s+e^y)^{-1}$.
After cancellation, the phase integral is the difference of two logarithms of $M_P$.
Posterior averaging then gives the power derivative in Lemma~\ref{lem:tangent}.

\begin{lemma}[The resolvent identity]
\label{lem:generator}
Assume in addition that $U$ satisfies \eqref{eq:finite-admissible}.
Let $g$ and $h_U$ be the functions associated with $U$ in Lemma~\ref{lem:tangent}.
For $s>0$ put $\varphi_s(y)=(s+e^y)^{-1}$.
Then
\begin{equation}
 F(\mathcal G_{B,F}\varphi_s)
 =\frac{h_U(s)+g(s)}B.
 \label{eq:generator-tangent}
\end{equation}
The generator is integrable against $F$ for $\varphi\in C_c^2(\R)$ and for $\varphi_s$, without assuming a log-rate moment.
It annihilates constants and satisfies the positive minimum property: if a test in its domain attains a global minimum at $y$, then $\mathcal G_{B,F}\varphi(y)\ge0$.
\end{lemma}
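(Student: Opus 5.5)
The plan is to reduce \eqref{eq:generator-tangent} to a pointwise identity for each fixed posterior probability $P$, and then average using the Palm identity. By \eqref{eq:normalized-tangent} and $W_P(s)=\int b(s+b)^{-2}P(\dd b)$, the right side of \eqref{eq:generator-tangent} is $\E_{\DP(U)}\int\Phi(b,P)P(\dd b)$ with
\[
 \Phi(b,P)=\frac{b}{(s+b)^2}\{\psi_0(B+1)+1+\log M_P(s)\}.
\]
This is integrable: $W_P\le M_P\le s^{-1}$, and $M|\log M|$ is bounded on $(0,s^{-1}]$. Lemma~\ref{lem:dirichlet-posterior} in its signed form, together with $U/B=\exp_*F$, turns this into $F\bigl(y\mapsto\E^{(b)}\Phi(b,P)\bigr)$ with $b=e^y$. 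It therefore suffices to prove, for every $y$,
\[
 \mathcal G_{B,F}\varphi_s(y)=\frac{b}{(s+b)^2}\Bigl(\psi_0(B+1)+1+\E^{(b)}\log M_P(s)\Bigr).
\]

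\textbf{Rewriting the two sides.} Since $\varphi_s'(y)=-b(s+b)^{-2}$, the drift part of $\mathcal G_{B,F}\varphi_s(y)$ is $-a_{B,F}(y)\,b(s+b)^{-2}$. From \eqref{eq:log-drift},
\[
 a_{B,F}(y)+\psi_0(B+1)+1=\E^{(b)}\Bigl[-\log M_P(b)+\int_0^\infty\xi_P(bu)K(u)\,\dd u\Bigr],
\]
because $y-\log b=0$. Hence the claim is equivalent to
\[
 \text{jump part}=\frac{b}{(s+b)^2}\,\E^{(b)}\Bigl[\log M_P(s)-\log M_P(b)+\int_0^\infty\xi_P(bu)K(u)\,\dd u\Bigr].
\]
In the jump part I substitute $u=e^v$, which gives $\nu_0(\dd v)=\dd u/(u-1)^2$. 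The jump part becomes
\[
 \E^{(b)}\int_0^\infty\xi_P(bu)\Bigl[\frac{1/(s+bu)-1/(s+b)}{(u-1)^2}+\frac{b\log u}{(s+b)^2(u-1)^2}\Bigr]\dd u .
\]
Here the definition \eqref{eq:acceptance} of $k_{B,F}$ has been used, together with an exchange of $\E^{(b)}$ and the $u$-integral; this is Tonelli after splitting into positive and negative parts, since $|\text{integrand}|\le\frac12\|\varphi_s''\|_\infty v^2$ and $m_2<\infty$.

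\textbf{Phase formula and the kernel identity.} Subtracting \eqref{eq:phase-anchor} at $s$ and at $b$ and substituting $t=bu$ gives
\[
 \log M_P(s)-\log M_P(b)=\int_0^\infty\xi_P(bu)\,b\Bigl(\frac1{s+bu}-\frac1{b+bu}\Bigr)\dd u .
\]
This integral converges absolutely, and so does $\int\xi_P(bu)K(u)\,\dd u$, since $K\in L^1$. The whole identity therefore reduces to a deterministic kernel identity, valid for every $u\ne1$:
\[
 \frac{1/(s+bu)-1/(s+b)}{(u-1)^2}+\frac{b\log u}{(s+b)^2(u-1)^2}
 =\frac{b}{(s+b)^2}\Bigl[\frac{b}{s+bu}-\frac{b}{b+bu}+K(u)\Bigr].
\]
The $\log u$ terms match the third term of $K$ in \eqref{eq:K}, and the term $1/(1+u)$ of $K$ cancels $b/(b+bu)$. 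What remains is to check that
\[
 -\frac{b}{(s+bu)(s+b)(u-1)}=\frac{b}{(s+b)^2}\Bigl[\frac{b}{s+bu}-\frac1{u-1}\Bigr].
\]
This follows from $b(u-1)-(s+bu)=-(s+b)$. Integrating the kernel identity against $\xi_P(bu)$ and taking $\E^{(b)}$ proves the pointwise claim, and so \eqref{eq:generator-tangent}.

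\textbf{Remaining assertions and the main obstacle.} Integrability against $F$ follows from \eqref{eq:generator-bound}. For $\varphi\in C_c^2$ the bound is uniformly bounded in $y$. For $\varphi_s$ one has $(|y|+C_B)\,e^y/(s+e^y)^2$ bounded in $y$, so no log-rate moment is needed. Constants are annihilated because $\varphi'=0$ and the jump integrand vanishes. At a global minimum $y$ of $\varphi$ we have $\varphi'(y)=0$ and $\varphi(y+v)-\varphi(y)\ge0$, while $k_{B,F}\ge0$, so $\mathcal G_{B,F}\varphi(y)\ge0$. I expect the main obstacle to be bookkeeping rather than algebra: the combined integrand is integrable near $u=1$, but the separate phase and $K$ pieces must each be shown absolutely integrable before they can be split. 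The same care is needed to justify exchanging $\E^{(b)}$ with the $u$-integral, and to ensure the signed Palm identity applies measurably (via Lemma~\ref{lem:bounds} and Appendix~\ref{sec:measurable-realizations}).
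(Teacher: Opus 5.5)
Your proposal is correct and is essentially the paper's proof: the same kernel cancellation (your $b/(b+bu)$ is the paper's $1/(1+u)$), the anchored phase formula subtracted at $s$ and $b$, and Palm averaging of $W_P(s)\{\psi_0(B+1)+1+\log M_P(s)\}$. The only difference is bookkeeping order: you move the posterior expectation inside the jump integral before cancelling, whereas the paper first computes a sample generator $\widehat{\mathcal G}_P\varphi_s$ for each fixed posterior $P$ and then averages.
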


\begin{proof}
For compactly supported tests, $(1+|y|)|\varphi'(y)|$ is bounded.
For $\varphi_s$ it is also bounded, since $\varphi_s'(y)=-e^y/(s+e^y)^2$ decays exponentially at both ends; its second derivative is bounded.
Integrability follows from \eqref{eq:generator-bound}.
Constants have zero generator.
At a global minimum the derivative vanishes and the remaining jump integral is nonnegative.

We prove the identity first for a fixed posterior sample.
Fix $b=e^y$ and a deterministic probability $P$ on positive rates.
Define the sample drift
\[
 \widehat a_P(y)
 =-\psi_0(B+1)-1-\log M_P(b)
       +\int_0^\infty\xi_P(bu)K(u)\,\dd u,
\]
whose posterior expectation is $a_{B,F}(y)$.
In the jump term set $u=e^v$, so $\nu_0(\dd v)=\dd u/(u-1)^2$.
Let $\widehat{\mathcal G}_P$ denote the operator in \eqref{eq:generator} with drift $\widehat a_P$ and jump acceptance $\xi_P(be^v)$.
The elementary identity
\begin{multline}
 \frac{(s+bu)^{-1}-(s+b)^{-1}
                  +b\log u/(s+b)^2}{(u-1)^2}
       -\frac{b}{(s+b)^2}K(u)\\
 =\frac{b}{(s+b)^2}
          \left\{\frac{b}{s+bu}-\frac1{1+u}\right\}
 \label{eq:direct-resolvent-cancellation}
\end{multline}
holds for $u>0$, $u\ne1$, and extends by continuity at one.
The first term is absolutely integrable: it is the log-coordinate Taylor remainder just estimated.
The $K$ term is absolutely integrable.
The bracketed kernel on the right has absolute integral $|\log(s/b)|$; the full right-hand side has absolute integral $b(s+b)^{-2}|\log(s/b)|$.
All three integrals are therefore absolutely convergent, so the cancellation is legitimate.

Subtracting the phase representations at $s$ and $b$ and changing variables $t=bu$ gives
\[
 \log M_P(s)-\log M_P(b)
 =\int_0^\infty\xi_P(bu)
       \left\{\frac{b}{s+bu}-\frac1{1+u}\right\}\dd u.
\]
Since $\varphi_s'(y)=-b/(s+b)^2$, the sample drift and jump terms therefore sum to
\begin{equation}
 \widehat{\mathcal G}_P\varphi_s(y)
 =\frac{b}{(s+b)^2}
            \{\psi_0(B+1)+1+\log M_P(s)\}.
 \label{eq:sample-resolvent}
\end{equation}

We next take posterior expectations and integrate in $F(\dd y)$.
The application of the Palm identity to the signed right-hand side is justified first by Tonelli for its absolute value: the resulting expectation is
\[
 \E_{\DP(U)}\left[
 W_P(s)|\psi_0(B+1)+1+\log M_P(s)|\right]<\infty,
\]
because $W_P\leq M_P\leq1/s$ and $M_P(1+|\log M_P|)$ is uniformly bounded.
To interchange the integrals on the left, we also need an absolute bound before averaging the sample drift.
The posterior atom estimate gives
\begin{equation}
 \E^{(b)}|\widehat a_P(y)-y|
 \leq |\psi_0(B+1)+1|+\log2+\psi_0(B+1)-\psi_0(1)+\|K\|_1
 =:C'_B<\infty.
 \label{eq:absolute-posterior-drift}
\end{equation}
Hence the expected absolute sample drift term is bounded by $(|y|+C'_B)|\varphi_s'(y)|$, a bounded function of $y$.
The absolute sample jump integral is at most $m_2\|\varphi_s''\|_\infty/2$.
These estimates justify both the posterior expectations and the signed Palm interchange without a log-rate moment hypothesis.
We obtain
\[
 F(\mathcal G_{B,F}\varphi_s)
 =\E_{\DP(U)}\left[
 W_P(s)\{\psi_0(B+1)+1+\log M_P(s)\}\right].
\]
Equation~\eqref{eq:normalized-tangent} now gives \eqref{eq:generator-tangent}.
The parameter $B$ in the digamma term has not changed under the posterior disintegration.
\end{proof}

\begin{remark}
If $F\in\Ptwo$, then $U=B\exp_*F$ is Thorin-admissible, since
\begin{equation}
 \int\log(1+b^{-1})\,U(\dd b)
 =B F\bigl(\log(1+e^{-y})\bigr)
 \leq B\{\log2+F(|y|)\}<\infty.
 \label{eq:log-moment-admissibility}
\end{equation}
Every finite gamma convolution gives such an $F$.
We shall preserve this second logarithmic moment on finite time intervals; no moment of the positive rate itself is required.
\end{remark}

\section{Evolution of the Thorin measure}
\label{sec:evolution}

We construct the curve of log-rate probabilities from the operator of Section~\ref{sec:tangent}.
The proof has two ingredients: continuity of the averaged operator under weak convergence, and an Euler approximation in which each step is a probability kernel.

For \(B>0\) and \(F\in\mathcal P(\R)\), retain the coefficients of Section~\ref{sec:tangent} and the relation \(U=B\exp_*F\).
Recall that
\begin{equation}
 \mathcal G_{B,F}\varphi(y)
 =a_{B,F}(y)\varphi'(y)
  +\int_{\R}\bigl[
       \varphi(y+v)-\varphi(y)-v\varphi'(y)
    \bigr]k_{B,F}(y,v)\,\nu_0(\dd v),
 \qquad
 \nu_0(\dd v)=\frac{\dd v}{4\sinh^2(v/2)}.
 \label{eq:evolution-generator}
\end{equation}
The compensation in this expression is over all jump sizes.
Lemma~\ref{lem:bounds} gives, for every compact interval \(I\subset(0,\infty)\), a constant \(C_I<\infty\) such that
\begin{equation}
 0\le k_{B,F}(y,v)\le1,\qquad
 |a_{B,F}(y)-y|\le C_I,\qquad
 m_2:=\int_{\R}v^2\,\nu_0(\dd v)<\infty,
 \qquad B\in I.
 \label{eq:evolution-uniform-bounds}
\end{equation}
These bounds hold for every probability \(F\), without a moment assumption.
The continuity result below will permit passage to the limit in the dependence of the coefficients on \(F\).

\subsection{Weak continuity of the averaged generator}

\begin{lemma}[Joint continuity of the averaged generator]\label{lem:continuity}
Let \(B_n,B>0\), \(F_n,F\in\mathcal P(\R)\), and \(y_n,y\in\R\).
Suppose \(B_n\to B\), \(F_n\Rightarrow F\), and \(y_n\to y\).
Then
\[
 a_{B_n,F_n}(y_n)\longrightarrow a_{B,F}(y).
 \]
For every \(\varphi\in C_c^2(\R)\),
\begin{equation}
 \mathcal G_{B_n,F_n}\varphi(y_n)
   \longrightarrow\mathcal G_{B,F}\varphi(y).
 \label{eq:generator-joint-continuity}
\end{equation}
Moreover, \(\mathcal G_{B,F}\varphi(y)\) is bounded uniformly over all probabilities \(F\), all \(y\in\R\), and \(B\) in a fixed compact positive interval.
Consequently the scalar functional
\begin{equation}
 H_\varphi(B,F):=F(\mathcal G_{B,F}\varphi)
 \label{eq:averaged-generator-functional}
\end{equation}
is continuous for the usual topology on \(B>0\) and the narrow topology on probabilities \(F\).
It is uniformly bounded when \(B\) is restricted to a compact positive interval.
\end{lemma}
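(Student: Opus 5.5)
The plan is to realize all posterior laws on one probability space and prove pathwise (almost sure) convergence of the integrands. Dominated convergence, with the uniform bounds of Lemma~\ref{lem:bounds}, then gives convergence of the averaged coefficients.

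\textbf{Step 1: coupling.} By Lemma~\ref{lem:parameterized-dirichlet}, there is a fixed probability space carrying a jointly Borel realization $Q_{B,F}\sim\DP(B\exp_*F)$ and $Z_B\sim\operatorname{Beta}(1,B)$. The realization I would use is stick-breaking (Lemma~\ref{lem:stick-breaking}). The locations are quantile transforms $Y_j=F^{-1}(\Theta_j)$ of i.i.d.\ uniforms, and the weights are $V_j=1-(1-\Lambda_j)^{1/B}$, which are continuous in $B$. Under $F_n\Rightarrow F$ one has $F_n^{-1}(\theta)\to F^{-1}(\theta)$ for a.e.\ $\theta$. Since the tail mass of the stick vanishes uniformly for $B$ in a compact set, this gives almost surely
\[
 P_n:=P^{(e^{y_n})}_{B_n,F_n}\Rightarrow P:=P^{(e^y)}_{B,F}
\]
as probability measures on $(0,\infty)$. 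The added atom $Z\delta_{e^{y_n}}$ converges trivially.

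\textbf{Step 2: continuity of the integrands.}

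For the logarithmic term, $M_{P_n}(b_n)\to M_P(b)$ because $(s,b')\mapsto(s+b')^{-1}$ is bounded and continuous on compacts in $s$. The posterior atom bound $Z_{B_n}/2\le b_nM_{P_n}(b_n)\le1$ dominates $|\log(b_nM_{P_n}(b_n))|$ by $\log 2+|\log Z_{B_n}|$. This bound is uniformly integrable because its expectations $\psi_0(B_n+1)-\psi_0(1)$ converge. Vitali's theorem then gives convergence of the expectation.

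For the phase terms, I would not try to prove pointwise convergence of $\xi_{P_n}$, since the boundary values are only a.e.\ objects. Instead I would show $\xi_{P_n}\to\xi_P$ weakly-$*$ in $L^\infty(0,\infty)$. The functions are bounded in $[0,1]$, so it suffices to identify every weak-$*$ limit point. By \eqref{eq:phase-anchor}, any such limit $\eta$ satisfies
\[
 \log M_P(s)-\log M_P(1)=\int\eta(t)\Bigl(\tfrac1{s+t}-\tfrac1{1+t}\Bigr)\dd t .
\]
Passing to the limit here is allowed because the kernel is in $L^1$ and $M_{P_n}(s)\to M_P(s)$. The uniqueness in Lemma~\ref{lem:phase} then forces $\eta=\xi_P$ a.e. After the change of variables $t=b_nu$, the integrals $\int\xi_{P_n}(b_nu)K(u)\,\dd u$ converge because $K\in L^1$ and $b_n\to b$, using continuity of dilation in $L^1$ to absorb the moving $b_n$. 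The same argument applies to the jump term, written through $u=e^v$ as $\int\xi_{P_n}(b_nu)\,\Phi_n(u)\,\dd u$ with $\Phi_n(u)=[\varphi(y_n+\log u)-\varphi(y_n)-\log u\,\varphi'(y_n)]/(u-1)^2$. Here $\Phi_n\to\Phi$ in $L^1(0,\infty)$, by dominated convergence with the bound $\tfrac12\|\varphi''\|_\infty(\log u)^2/(u-1)^2$. Each integral is bounded by a deterministic constant, so dominated convergence in the auxiliary randomness gives convergence of $a_{B_n,F_n}(y_n)$ and of the jump part. The drift term is continuous because $\varphi'$ is continuous and $y_n\to y$. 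This proves \eqref{eq:generator-joint-continuity}.

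\textbf{Step 3: uniform bound and continuity of $H_\varphi$.} The uniform bound follows from \eqref{eq:generator-bound}, since $(|y|+C_I)|\varphi'(y)|$ is bounded for compactly supported $\varphi$. For $H_\varphi$, set $f_n=\mathcal G_{B_n,F_n}\varphi$ and $f=\mathcal G_{B,F}\varphi$. These are uniformly bounded, $f$ is continuous by Step 2, and $f_n(y_n)\to f(y)$ whenever $y_n\to y$. By Skorokhod's representation take $Y_n\sim F_n$ with $Y_n\to Y\sim F$ almost surely; then $F_n(f_n)=\E f_n(Y_n)\to\E f(Y)$ by bounded convergence.

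\textbf{Main obstacle.} I expect the main difficulty to be the first part of Step 2: showing that the phase depends continuously on $P$, and making the almost-sure coupling legitimate when $F$ has atoms or $B$ varies. The weak-$*$ identification through uniqueness of the Stieltjes phase is what I would try first.
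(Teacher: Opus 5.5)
Your proposal is correct and follows essentially the same route as the paper: the quantile and common-uniform stick-breaking coupling of Lemma~\ref{lem:parameterized-dirichlet}, weak-$*$ convergence of the phases identified through uniqueness of the anchored Stieltjes phase, dominated convergence for the bounded phase and jump integrals, and the uniform bound \eqref{eq:generator-bound}. The differences are only technical and all of them work: you absorb the moving rate $b_n$ by $L^1$-continuity of dilations where the paper rescales the phase first, you use Scheff\'e/Vitali where the paper uses the monotone bound $\log 2-\log(1-T_0^{1/B_*})$ for the logarithmic term, and you use a Skorokhod coupling where the paper uses local uniform convergence plus tightness for $H_\varphi$.
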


\begin{proof}
We first couple the posterior probabilities so that their weak convergence holds almost surely.
Use the fixed probability space and the quantile maps of Lemma~\ref{lem:parameterized-dirichlet}.
For a probability \(F\) on \(\R\), put \(Z_j=q_F(S_j)\) and \(V_j=1-T_j^{1/B}\).
Thus the locations are i.i.d. with law \(F\), the break fractions are i.i.d. with law \(\operatorname{Beta}(1,B)\), and the two sequences are independent.
On the common full-probability event in that lemma, the resulting stick-breaking probability has the form
\begin{equation}
 W_j=V_j\prod_{i<j}(1-V_i),
 \qquad
 Q=\sum_{j\ge1}W_j\delta_{Z_j}.
 \label{eq:evolution-stick-breaking}
\end{equation}
The remaining mass \(R_m=\prod_{j\le m}(1-V_j)\) decreases to zero on the common full-probability event specified in that lemma, and \(Q\) has law \(\DP(BF)\), including for atomic base probabilities.

For the sequence \((B_n,F_n)\), set \(Z_{n,j}=q_{F_n}(S_j)\).
Then \(Z_{n,j}\to Z_j\) almost surely for every \(j\).
For completeness, convergence of the inverse distribution functions holds at every continuity point of the limiting inverse: it follows by bracketing that inverse between continuity points of the limiting distribution function and using \(F_n\Rightarrow F\).
The exceptional set has Lebesgue measure zero because an inverse distribution function is monotone.
Also use common uniforms \(T_j\) to set
\[
 V_{n,j}=1-T_j^{1/B_n},
 \qquad V_j=1-T_j^{1/B}.
 \]
Every fixed finite set of weights and locations then converges almost surely.
On that event, for a bounded continuous function \(f\), the difference between the two stick-breaking integrals is bounded by the difference of their first \(m\) terms plus \(\|f\|_\infty(R_{n,m}+R_m)\).
First letting \(n\to\infty\), and then \(m\to\infty\), proves
\[
 Q_n\Rightarrow Q\quad\hbox{almost surely}.
 \]
The event just used concerns only the countably many weights and locations, so it gives weak convergence of the random measures, not merely convergence for one chosen test function.

Use the uniform variable \(T_0\), independent of both coordinate sequences \((S_j)_j\) and \((T_j)_j\), and put
\[
 Z_n^*=1-T_0^{1/B_n},\qquad Z^*=1-T_0^{1/B}.
 \]
By the posterior coupling \eqref{eq:palm-coupling}, the log-rate probabilities
\[
 \widehat P_n=(1-Z_n^*)Q_n+Z_n^*\delta_{y_n},
 \qquad
 \widehat P=(1-Z^*)Q+Z^*\delta_y
 \]
have laws \(\DP(B_nF_n+\delta_{y_n})\) and \(\DP(BF+\delta_y)\), respectively.
They converge weakly almost surely.
Their rate-coordinate pushforwards \(P_n=\exp_*\widehat P_n\) and \(P=\exp_*\widehat P\) are the posterior probabilities appearing in the coefficients.

The next point is convergence of the phases against integrable kernels.
For each coupled sample, set \(b_n=e^{y_n}\), \(b=e^y\), and
\begin{equation}
 \begin{split}
 R_n(s)&=b_nM_{P_n}(b_ns)
       =\int_{\R}\frac{\widehat P_n(\dd z)}
                           {s+e^{z-y_n}},\\
 \eta_n(u)&=\xi_{P_n}(b_nu),\qquad
 R(s)=\int_{\R}\frac{\widehat P(\dd z)}
                         {s+e^{z-y}},\qquad
 \eta(u)=\xi_P(bu).
 \end{split}
 \label{eq:scaled-posterior-transform}
\end{equation}
For fixed \(s>0\), the integrand defining \(R_n(s)\) is bounded and continuous in \(z\).
Its derivative with respect to \(y_n\) has absolute value at most \(1/(4s)\), uniformly in \(z\).
Hence \(R_n(s)\to R(s)>0\).
The anchored phase representation \eqref{eq:phase-anchor}, after scaling the rate variable, gives
\begin{equation}
 \log R_n(s)-\log R_n(1)
   =\int_0^\infty\eta_n(u)
       \left(\frac1{s+u}-\frac1{1+u}\right)\dd u,
 \qquad 0\le\eta_n\le1.
 \label{eq:scaled-phase-anchor}
\end{equation}

These identities imply \(\eta_n\to\eta\) weak-star in \(L^\infty(0,\infty)\); explicitly,
\[
 \int_0^\infty f(u)\eta_n(u)\,\dd u
 \longrightarrow\int_0^\infty f(u)\eta(u)\,\dd u
 \qquad\text{for every }f\in L^1(0,\infty).
\]
We prove this assertion by compactness and uniqueness.
Since \(L^1(0,\infty)\) is separable, any subsequence has a further subsequence whose integrals converge on a countable dense set of \(L^1\) tests.
The uniform bound one extends the limits to a bounded linear functional on \(L^1\), represented by a function \(\eta_*\) with \(0\le\eta_*\le1\).
The kernels in \eqref{eq:scaled-phase-anchor} belong to \(L^1\).
Passing to the limit shows that \(\eta_*\) represents \(\log R(s)-\log R(1)\).
Uniqueness of the exponential Stieltjes phase representation, equivalently that for the reciprocal complete Bernstein function, gives \(\eta_*=\eta\) almost everywhere; see \cite[Theorems~6.10 and~7.3]{SSV2010}.
Every subsequential limit is therefore the same, which proves convergence against every \(L^1\) kernel.
This argument uses only integrals of the phases, so their exceptional boundary values, including after rate scaling, play no role.

We can now pass to the limit in the posterior averages.
The drift formula \eqref{eq:log-drift} takes the form
\begin{equation}
 a_{B_n,F_n}(y_n)
 =y_n-\psi_0(B_n+1)-1
   +\E\left[-\log R_n(1)
           +\int_0^\infty\eta_n(u)K(u)\,\dd u\right].
 \label{eq:continuity-drift-formula}
\end{equation}
The kernel \(K\) belongs to \(L^1(0,\infty)\) by Lemma~\ref{lem:bounds}.
The phase integrals converge almost surely by weak-star convergence and are bounded in absolute value by \(\|K\|_1\).
Moreover,
\[
 \frac{Z_n^*}{2}\le R_n(1)\le1.
 \]
Choose \(B_*<\infty\) with \(B_n\le B_*\) for all \(n\).
The common-uniform coupling gives the integrable bound
\[
 0\le-\log R_n(1)
 \le\log2-\log(1-T_0^{1/B_*}).
 \]
Integrability follows from the logarithmic moment of a \(\operatorname{Beta}(1,B_*)\) variable.
Dominated convergence in \eqref{eq:continuity-drift-formula} proves continuity of the drift.

For the jump part, put \(u=e^v\).
The integral against a single posterior phase has kernel
\begin{equation}
 J_\varphi(y,u)
 =\frac{\varphi(y+\log u)-\varphi(y)
              -\log u\,\varphi'(y)}{(u-1)^2}.
 \label{eq:continuity-jump-kernel}
\end{equation}
Taylor's theorem gives
\[
 |J_\varphi(y,u)|
 \le\frac{\|\varphi''\|_\infty}{2}
                 \frac{(\log u)^2}{(u-1)^2}.
 \]
The dominating function is integrable on \((0,\infty)\): at \(u=1\) the quotient has a removable finite limit, and at zero and infinity it is bounded by integrable multiples of \((\log u)^2\) and \((\log u)^2/u^2\), respectively.
Consequently \(J_\varphi(y_n,\cdot)\to J_\varphi(y,\cdot)\) in \(L^1\).
This convergence and the weak-star convergence of \(\eta_n\) imply convergence of the random jump integrals.
Their uniform bound permits taking expectations.
This proves \eqref{eq:generator-joint-continuity}.

Finally, we integrate against the varying base probability.
For \(\varphi\in C_c^2(\R)\),
\begin{equation}
 |\mathcal G_{B,F}\varphi(y)|
 \le (|y|+C_I)|\varphi'(y)|
       +\frac{m_2}{2}\|\varphi''\|_\infty.
 \label{eq:compact-test-generator-bound}
\end{equation}
Its right side is bounded uniformly in \(y\).
Joint continuity implies that \(\mathcal G_{B_n,F_n}\varphi\to\mathcal G_{B,F}\varphi\) uniformly on compact \(y\)-sets: otherwise a sequence of points violating uniform convergence has a convergent subsequence contradicting \eqref{eq:generator-joint-continuity}.
Tightness of \(F_n\), this local uniform convergence, the uniform global bound, and weak convergence against the bounded continuous limiting function give \(H_\varphi(B_n,F_n)\to H_\varphi(B,F)\).
\end{proof}

\subsection{Existence by positive Euler approximation}

\begin{theorem}[Existence of a log-rate evolution]\label{thm:evolution}
Let \(B_0>0\), \(F_0\in\Ptwo\), and \(T>0\).
Set \(B_t=B_0e^{-t}\).
There is a narrowly continuous curve \((F_t)_{0\le t\le T}\) of probabilities in \(\Ptwo\) such that
\begin{equation}
 \sup_{0\le t\le T}F_t(y^2)<\infty
 \label{eq:evolution-second-moment}
\end{equation}
and
\begin{equation}
 F_t(\varphi)-F_0(\varphi)
   =\int_0^t F_r(\mathcal G_{B_r,F_r}\varphi)\,\dd r,
 \qquad
 0\le t\le T,\quad \varphi\in C_c^2(\R).
 \label{eq:evolution-weak-equation}
\end{equation}
\end{theorem}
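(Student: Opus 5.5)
We construct the curve as a limit of Euler schemes whose steps are probability kernels. Fix a mesh $h=T/N$ and times $t_k=kh$. Given $F^h_{t_k}\in\Ptwo$, freeze the coefficients $B=B_{t_k}$ and $F=F^h_{t_k}$. Let $F^h_t$ for $t\in[t_k,t_{k+1}]$ be the time marginals of the linear, time-homogeneous jump--drift process with generator $\mathcal G_{B,F}$, started from $F^h_{t_k}$.

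This linear process is well posed because of the bounds \eqref{eq:evolution-uniform-bounds}. The drift $a_{B,F}$ has at most linear growth, $a_{B,F}(y)=y+O(1)$. The jump intensity $k_{B,F}(y,v)\nu_0(\dd v)$ is dominated by $\nu_0$, which has finite second moment. If a clean existence theory for measurable but possibly discontinuous coefficients is inconvenient, a cruder positive step suffices. In that version we truncate jumps with $|v|<\epsilon$ and replace them by their compensator drift, which is bounded by $\int_{|v|<\epsilon}|v|\,\nu_0$; this is finite only after the compensation has been combined. The remaining compound-Poisson part has finite intensity, and acceptance is done by thinning with $k_{B,F}\in[0,1]$. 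The drift part is an explicit Euler map $y\mapsto y+ha(y)$. Each step is then a genuine Markov kernel, and the scheme satisfies, up to errors $o(1)$ as $h,\epsilon\to0$ uniformly for $\varphi\in C_c^2$,
\[
F^h_t(\varphi)-F_0(\varphi)=\int_0^tF^h_{\lfloor r\rfloor_h}\bigl(\mathcal G_{B_{\lfloor r\rfloor_h},F^h_{\lfloor r\rfloor_h}}\varphi\bigr)\dd r+o(1).
\]

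Next come the a priori estimates. Apply \eqref{eq:quadratic-bound} to $\iota^2$, justified by a cutoff argument or directly on the kernel step. This gives $\frac{\dd}{\dd t}F^h_t(y^2)\le 3F^h_t(y^2)+C$, and Gronwall yields $\sup_{h,t\le T}F^h_t(y^2)<\infty$, uniformly since $B_t$ ranges over the compact interval $[B_0e^{-T},B_0]$. This gives tightness at each time. Equicontinuity comes from \eqref{eq:generator-bound} and Lemma~\ref{lem:continuity}: $|F^h_t(\varphi)-F^h_s(\varphi)|\le C_\varphi|t-s|$ for each $\varphi\in C_c^2$, with $C_\varphi$ uniform in $F$ and $y$. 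Using a countable convergence-determining family of $C_c^2$ tests, an Arzelà--Ascoli argument on the narrow topology, which is metrizable on tight sets, gives a subsequence $F^{h_j}\to F$ uniformly in $t$. The limit is narrowly continuous and inherits the moment bound \eqref{eq:evolution-second-moment} by lower semicontinuity.

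To pass to the limit in the equation, note that $F^{h_j}_{\lfloor r\rfloor_h}\Rightarrow F_r$ for each $r$ by equicontinuity, and $B_{\lfloor r\rfloor_h}\to B_r$. Lemma~\ref{lem:continuity} gives continuity and uniform boundedness of $H_\varphi(B,F)=F(\mathcal G_{B,F}\varphi)$. Hence the integrand converges pointwise in $r$ and is bounded, and dominated convergence yields \eqref{eq:evolution-weak-equation}.

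The main obstacle is the construction of each Euler step as a probability kernel with the correct first-order expansion uniformly over $C_c^2$ tests. The measure $\nu_0$ has infinite mass near $v=0$, so the small jumps must be handled by compensation with a vanishing error. The drift is unbounded, so the expansion error must be controlled using the second-moment bound rather than sup norms. I would first try a step of the form $h$ times a thinned $\nu_0$ restricted to $|v|\ge\epsilon$, with the compensated small-jump drift added. The Taylor error is then bounded by $h^2$ times quantities involving $(1+|y|)^2$, which the Gronwall moment bound absorbs.
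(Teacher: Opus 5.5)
Your overall architecture is the paper's. It consists of positive Euler steps with truncated jumps, a Gronwall bound on $F^h_t(y^2)$, tightness plus equicontinuity, and passage to the limit through the continuity and uniform boundedness of $H_\varphi$ in Lemma~\ref{lem:continuity}. Once the step is right, your equicontinuity argument with countably many $C_c^2$ tests is a fine substitute for the paper's martingale coupling in $d_{\mathrm{BL}}$.

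The gap is in the step you yourself call the main obstacle: the compensation is attached to the wrong jumps. Since $\mathcal G_{B,F}$ is compensated over all $v$, the small jumps need no replacement drift. Their contribution $\int_{|v|\le\epsilon}[\varphi(y+v)-\varphi(y)-v\varphi'(y)]k_{B,F}(y,v)\,\nu_0(\dd v)$ is at most $\epsilon\|\varphi''\|_\infty$, because $\int_{|v|\le\epsilon}v^2\nu_0(\dd v)\le2\epsilon$, so they are simply dropped. The ``small-jump compensator drift'' $\int_{|v|<\epsilon}v\,k_{B,F}(y,v)\,\nu_0(\dd v)$ that you propose to add is not finite in general. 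Your own bound $\int_{|v|<\epsilon}|v|\,\nu_0(\dd v)$ equals $+\infty$, and $k_{B,F}$ has no symmetry in $v$. Already for $F=\delta_y$ one has $Q=P^{(b)}=\delta_b$ and $\xi_P=\mathbf 1_{(b,\infty)}$, hence $k_{B,F}(y,v)=\mathbf 1_{\{v>0\}}$, and that integral diverges.

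What must be corrected is the retained part. Taking the jumps with $|v|>\epsilon$ uncompensated shifts the mean increment by $hm_\epsilon(y)$, where $m_\epsilon(y)=\int_{|v|>\epsilon}v\,k_{B,F}(y,v)\,\nu_0(\dd v)$. The Euler drift must therefore be $a_\epsilon=a_{B,F}-m_\epsilon$. With drift $a_{B,F}$, the one-step expansion misses $\mathcal G_{B,F}\varphi$ by $m_\epsilon\varphi'$. In the example, $m_\epsilon(y)=\epsilon/(e^\epsilon-1)-\log(1-e^{-\epsilon})\sim|\log\epsilon|$. So this error does not vanish as $\epsilon\to0$, and the second-moment recursion is no longer uniform in $\epsilon$.

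Even with the correct drift, $|a_\epsilon(y)|$ is of size $|y|+|\log\epsilon|$. The Taylor remainder at the no-jump point is therefore of order $h^2(y^2+1+|\log\epsilon|^2)\|\varphi''\|_\infty$, not $h^2(1+|y|)^2$. Moreover, the retained intensity $\lambda_\epsilon$ can be of order $1/\epsilon$; it equals $1/(e^\epsilon-1)$ in the example. So positivity of the no-jump weight $1-h\lambda_\epsilon$ requires $h\lambda_\epsilon$ to be small. If you use exact compound-Poisson steps instead, smallness of the multiple-jump error requires the same thing. Hence $\epsilon$ cannot be sent to zero independently of $h$.

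The paper couples them by $\epsilon=\sqrt h$, which gives $p_\epsilon\ge1/2$ and a total error $O(\sqrt h+h|\log h|^2)$. It also places the no-jump atom at $y+ha_\epsilon/p_\epsilon$, so that the mean increment is exactly $ha_{B,F}(y)$ even though $\int|v|\,\nu_0(\dd v)=\infty$. This exact cancellation is what lets $2ya_{B,F}\le3y^2+C_I^2$ drive the moment recursion.

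Your first option (the exact linear process on each step) does not close the gap either. The coefficient bounds alone give neither existence nor uniqueness. Uniqueness is not needed, but existence would require an external martingale-problem theorem, using for instance the continuity of $y\mapsto\mathcal G_{B,F}\varphi(y)$ from Lemma~\ref{lem:continuity}, and you have not supplied one.
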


The displayed identity is the weak evolution equation used in this paper.
Its construction gives the candidate Thorin measures; Section~\ref{sec:identification} will identify their value distributions.

\begin{proof}
All constants below may depend on \(B_0,T\) and \(F_0(y^2)\), but not on the mesh or on the intermediate probability measures.
The mass parameter lies in the fixed compact interval \(I=[B_0e^{-T},B_0]\).

\emph{The discrete kernel.}
Take \(h=T/N\) and \(\epsilon=\sqrt h\), with \(h\) sufficiently small.
Freeze \(B,F\) during one step, and abbreviate
\begin{equation}
 \begin{split}
 \lambda_\epsilon(y)
   &=\int_{|v|>\epsilon}k_{B,F}(y,v)\,\nu_0(\dd v),\\
 m_\epsilon(y)
   &=\int_{|v|>\epsilon}v k_{B,F}(y,v)\,\nu_0(\dd v),\\
 a_\epsilon(y)&=a_{B,F}(y)-m_\epsilon(y),
 \qquad
 p_\epsilon(y)=1-h\lambda_\epsilon(y).
 \end{split}
 \label{eq:euler-truncated-coefficients}
\end{equation}
The universal dominating measure gives, for \(0<\epsilon\le1\),
\begin{equation}
 \begin{split}
 \lambda_\epsilon(y)
 &\le\int_{|v|>\epsilon}\nu_0(\dd v)
   =\frac{2}{e^\epsilon-1}\le\frac2\epsilon,\\
 |m_\epsilon(y)|
 &\le\int_{|v|>\epsilon}|v|\,\nu_0(\dd v)\\
 &=2\left\{\frac{\epsilon}{e^\epsilon-1}
                      -\log(1-e^{-\epsilon})\right\}
   \le C(1+|\log\epsilon|).
 \end{split}
 \label{eq:euler-truncation-bounds}
\end{equation}
The equalities follow by integrating on the positive half-line and using the evenness of \(\nu_0\); on that half-line its density is \(-\frac{\dd}{\dd v}(e^v-1)^{-1}\).
In particular,
\begin{equation}
 |a_\epsilon(y)|
 \le |y|+C_I+C(1+|\log\epsilon|).
 \label{eq:euler-drift-bound}
\end{equation}
If \(h\le1/16\), then \(p_\epsilon(y)\ge1-2\sqrt h\ge1/2\), uniformly in \(B,F,y\).
Define
\begin{equation}
 \begin{split}
 \Pi_h^{B,F}(y,\dd z)
 ={}&p_\epsilon(y)\,
      \delta_{\,y+h a_\epsilon(y)/p_\epsilon(y)}(\dd z)\\
 &+h\int_{|v|>\epsilon}
       \delta_{y+v}(\dd z)\,k_{B,F}(y,v)\,\nu_0(\dd v).
 \end{split}
 \label{eq:positive-euler-kernel}
\end{equation}
Measurability of the coefficients makes this a Borel kernel.
Both terms are nonnegative and their total mass is exactly one.
Thus $\Pi_h^{B,F}(y,\cdot)$ is a probability for every $y$.
For a test $\varphi$ and a probability $F$, our kernel notation is
\[
 \Pi_h^{B,F}\varphi(y)=\int\varphi(z)\Pi_h^{B,F}(y,\dd z),
 \qquad
 (F\Pi_h^{B,F})(A)=\int\Pi_h^{B,F}(y,A)F(\dd y).
\]
The superscript \(h\) labels the time step, and \(F_j^h\) denotes the approximate law at time \(jh\).
Starting from \(F_0^h=F_0\), we can therefore define recursively
\begin{equation}
 F_{j+1}^h
   =F_j^h\Pi_h^{B_{jh},F_j^h},
 \qquad j=0,\ldots,N-1.
 \label{eq:positive-euler-recursion}
\end{equation}
Since the coefficients are defined for every probability, each successive step is well defined.

For one step from \(y\), write \(\Delta=z-y\).
Direct calculation from \eqref{eq:positive-euler-kernel} gives
\begin{equation}
 \begin{split}
 \E(\Delta\mid y)&=h a_{B,F}(y),\\
 \E(\Delta^2\mid y)
   &=h\int_{|v|>\epsilon}v^2 k_{B,F}(y,v)\,\nu_0(\dd v)
       +\frac{h^2a_\epsilon(y)^2}{p_\epsilon(y)}.
 \end{split}
 \label{eq:euler-exact-moments}
\end{equation}
The cancellation in the first identity is essential: it controls the mean increment although the untruncated first absolute jump moment is infinite.

\emph{Moment control.}
Put \(M_j^h=F_j^h(y^2)\).
From \eqref{eq:evolution-uniform-bounds},
\[
 2y a_{B,F}(y)\le3y^2+C_I^2.
 \]
Using \(p_\epsilon\ge1/2\), \eqref{eq:euler-drift-bound}, and \eqref{eq:euler-exact-moments}, we obtain
\[
 M_{j+1}^h
 \le (1+3h+C h^2)M_j^h
       +C h+C h^2(1+|\log h|^2).
 \]
Because \(h(1+|\log h|^2)\) is bounded for sufficiently small \(h\), this implies
\begin{equation}
 M_{j+1}^h\le(1+C_T h)M_j^h+C_T h,
 \qquad
 \sup_{\substack{h\ \mathrm{small}\\0\le j\le N}}M_j^h
       \le C_{T,B_0,F_0}<\infty.
 \label{eq:euler-uniform-moments}
\end{equation}
The second inequality follows by iterating the scalar recursion, or by summing its geometric series.
This also proves by induction that each discrete law has a finite second moment, so every moment calculation just used is justified.

\emph{Compactness.}
We realize a finite Markov chain with initial law \(F_0\) and the kernels in \eqref{eq:positive-euler-recursion}.
This finite chain is obtained by iterating the Borel kernels just constructed.
Writing \(Y_j\) for its state at step \(j\), let
\[
 D_{j+1}
  =Y_{j+1}-Y_j-h a_{B_{jh},F_j^h}(Y_j).
 \]
With respect to the filtration $\mathcal F_j=\sigma(Y_0,\ldots,Y_j)$, these are square-integrable martingale differences, by \eqref{eq:euler-exact-moments}.
The same formula and \eqref{eq:euler-uniform-moments} yield
\[
 \E D_{j+1}^2\le C_T h,
 \qquad
 \E\bigl|a_{B_{jh},F_j^h}(Y_j)\bigr|^2\le C_T.
 \]
For \(i<j\), martingale orthogonality gives \(\E|\sum_{\ell=i}^{j-1}D_{\ell+1}|^2\le C_T(j-i)h\).
Cauchy--Schwarz bounds the squared drift sum by \(C_T((j-i)h)^2\).
Consequently
\begin{equation}
 \E|Y_j-Y_i|^2
   \le C_T\bigl\{(j-i)h+((j-i)h)^2\bigr\}.
 \label{eq:euler-time-moment}
\end{equation}

We interpolate the laws rather than the sample paths: for \(t=(j+\theta)h\), \(0\le\theta\le1\), set
\[
 \widetilde F_t^h=(1-\theta)F_j^h+\theta F_{j+1}^h.
 \]
Also let \(\overline F_t^h=F_j^h\) for \(jh\le t<(j+1)h\), with both interpolations equal to \(F_N^h\) at \(T\).
Use the bounded Lipschitz metric
\[
 d_{\mathrm{BL}}(F,G)
 =\sup_{\substack{\|f\|_\infty\le1\\ \operatorname{Lip}(f)\le1}}
                  |F(f)-G(f)|,
 \]
where $\operatorname{Lip}(f)=\sup_{x\ne y}|f(x)-f(y)|/|x-y|$.
This metric metrizes narrow convergence of probabilities on \(\R\).
The coupling in \eqref{eq:euler-time-moment}, followed by Cauchy--Schwarz, gives the uniform estimates
\begin{equation}
 \begin{split}
 d_{\mathrm{BL}}(\widetilde F_t^h,\widetilde F_s^h)
      &\le C_T\sqrt{|t-s|+h},\\
 \sup_{0\le t\le T}
 d_{\mathrm{BL}}(\widetilde F_t^h,\overline F_t^h)
      &\le C_T\sqrt h .
 \end{split}
 \label{eq:euler-time-modulus}
\end{equation}
For the first estimate, compare each interpolated law to a neighboring mesh law and apply \eqref{eq:euler-time-moment}; the squared time increment is absorbed into the constant depending on \(T\).
The second is the adjacent-step case.

The interpolated laws also satisfy \eqref{eq:euler-uniform-moments}. Thus \(\widetilde F_t^h(|y|>R)\le C_{T,B_0,F_0}/R^2\).
They are therefore uniformly tight.
Take a sequence of meshes tending to zero.
At each time in a countable dense subset of \([0,T]\) containing its endpoints, tightness gives a weakly convergent subsequence; diagonal selection gives one subsequence working at all these times.
On the real line this selection can equivalently be obtained from monotonicity of distribution functions and the displayed uniform tail bound.

The first estimate in \eqref{eq:euler-time-modulus} is an asymptotic, uniform time modulus.
Its limit on the dense time set is at most \(C_T\sqrt{|t-s|}\).
Uniform tightness and this bound extend the limiting probabilities uniquely to every time, giving a narrowly continuous curve \(F_t\).
Comparing any time to a finite dense time grid, and then using \eqref{eq:euler-time-modulus}, shows that along the chosen subsequence
\begin{equation}
 \sup_{0\le t\le T}
 d_{\mathrm{BL}}(\widetilde F_t^h,F_t)\longrightarrow0,
 \qquad
 \sup_{0\le t\le T}
 d_{\mathrm{BL}}(\overline F_t^h,F_t)\longrightarrow0.
 \label{eq:euler-uniform-narrow-limit}
\end{equation}
The limits remain probability measures: the uniform tail bound precludes loss of mass.
Approximating a bounded continuous function uniformly on a large compact interval by a Lipschitz function, and controlling the complementary tails, also gives uniform-in-time convergence for every fixed bounded continuous test.
Finally, apply this convergence to \(\min\{y^2,R\}\), and then let \(R\to\infty\).
It follows that \(F_t\in\Ptwo\) for every \(t\) and that \eqref{eq:evolution-second-moment} holds.
The curve is thus narrowly continuous and has the required uniform second-moment bound.

\emph{Consistency.}
Fix \(\varphi\in C_c^2(\R)\).
Taylor expansion at the no-jump location in \eqref{eq:positive-euler-kernel} gives a remainder bounded by
\[
 \frac{h^2\|\varphi''\|_\infty
                      a_\epsilon(y)^2}{2p_\epsilon(y)}.
 \]
The first-order term is \(h a_\epsilon(y)\varphi'(y)\).
Adding the retained jump terms converts it exactly into the truncated, fully compensated operator.
For the missing small jumps,
\[
 \int_{|v|\le\epsilon}v^2\,\nu_0(\dd v)\le2\epsilon,
 \]
because \(4\sinh^2(v/2)\ge v^2\).
We therefore have the pointwise consistency estimate
\begin{equation}
 \left|
 \frac{\Pi_h^{B,F}\varphi(y)-\varphi(y)}h
       -\mathcal G_{B,F}\varphi(y)
 \right|
 \le C_I\|\varphi''\|_\infty
       \left[\epsilon+
              h\{y^2+1+|\log\epsilon|^2\}\right].
 \label{eq:euler-consistency}
\end{equation}
Here and below the value of the constant \(C_I\) may be enlarged; it remains independent of \(F,y,h\).
Integrating against \(F_j^h\), using \eqref{eq:euler-uniform-moments}, and summing the steps yields an error, uniform in terminal time, at most
\begin{equation}
 C_{T,\varphi}\bigl[\sqrt h+h(1+|\log h|^2)\bigr]
   \longrightarrow0.
 \label{eq:euler-total-error}
\end{equation}
More explicitly, let \(\overline B_r^h=B_{jh}\) on the \(j\)-th mesh interval.
Telescoping the discrete equations and, on the final partial interval, taking the same convex combination as in \(\widetilde F_t^h\), gives
\begin{equation}
 \widetilde F_t^h(\varphi)-F_0(\varphi)
  =\int_0^t
       H_\varphi(\overline B_r^h,\overline F_r^h)\,\dd r
       +e_h(t),
 \qquad
 \sup_{0\le t\le T}|e_h(t)|\longrightarrow0 .
 \label{eq:euler-integrated-consistency}
\end{equation}
The convex interpolation therefore gives the same vanishing error at every terminal time.

\emph{Passage to the limit.}
Lemma~\ref{lem:continuity} says that \(H_\varphi\) is continuous and uniformly bounded on \(I\times\mathcal P(\R)\).
Equations \eqref{eq:euler-uniform-narrow-limit} and \(\sup_r|\overline B_r^h-B_r|\to0\) imply
\begin{equation}
 \sup_{0\le r\le T}
 \left|
 H_\varphi(\overline B_r^h,\overline F_r^h)
       -H_\varphi(B_r,F_r)
 \right|\longrightarrow0.
 \label{eq:euler-nonlinear-limit}
\end{equation}
For the uniform assertion, suppose otherwise and choose times \(r_h\) at which the difference is bounded away from zero.
Along a further subsequence, \(r_h\to r\).
Uniform narrow convergence and continuity of \(F_\cdot\) give \(\overline F_{r_h}^h\Rightarrow F_r\), while \(\overline B_{r_h}^h\to B_r\).
Continuity of \(H_\varphi\) gives a contradiction, also using continuity of \(r\mapsto H_\varphi(B_r,F_r)\).

Passing to the limit in \eqref{eq:euler-integrated-consistency}, we obtain \eqref{eq:evolution-weak-equation} for the fixed test \(\varphi\), at every time.
The subsequence used to construct \(F_\cdot\) was selected only through weak compactness, independently of \(\varphi\).
The same argument therefore applies to every \(\varphi\in C_c^2(\R)\), establishing the stated equation on that entire test class.
\end{proof}

\begin{remark}
The resulting rate measures are Thorin-admissible throughout the finite time interval.
Indeed,
\[
 \int_0^\infty\log(1+1/b)\,
                B_t\exp_*F_t(\dd b)
 =B_tF_t\bigl(\log(1+e^{-y})\bigr)
 \le B_t\bigl(\log2+F_t(|y|)\bigr)<\infty.
 \]
Thus the moment estimate gives admissibility at every time.
We turn next to the distributions represented by these Thorin measures.
\end{remark}

\section{Identification by transport}
\label{sec:identification}

The evolution constructed above gives a family of GGC distributions.
We show that its weak equation determines their Laplace transforms in the same way as deterministic powering.
A logarithmic change of the value variable then identifies the entire family.

\begin{theorem}[Identification of the weak evolution]
\label{thm:identification}
Let $B_0>0$ and $0\le T<\infty$.
Let $(F_t)_{0\le t\le T}$ be a narrowly continuous curve of probability measures on $\R$ satisfying
\begin{equation}
 K_T=\sup_{0\le t\le T}\int_\R y^2F_t(\dd y)<\infty.
 \label{eq:id-assumptions}
\end{equation}
Set $B_t=B_0e^{-t}$ and $U_t=B_t\exp_*F_t$.
Assume that the operator $\mathcal G$ defined in \eqref{eq:generator} satisfies, for every $t\in[0,T]$ and every $\varphi\in C_c^2(\R)$,
\begin{equation}
 F_t(\varphi)-F_0(\varphi)
 =\int_0^t F_u(\mathcal G_{B_u,F_u}\varphi)\,\dd u.
 \label{eq:id-weak-rate}
\end{equation}
Then each $U_t$ is an admissible finite Thorin measure.
Let $\mu_t$ be its zero-drift GGC law.
If $X_0$ has law $\mu_0$, then
\begin{equation}
 \mu_t=\Law(X_0^{e^t}),\qquad 0\le t\le T.
 \label{eq:id-conclusion}
\end{equation}
\end{theorem}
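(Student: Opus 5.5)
Admissibility of each \(U_t\) follows as in the preceding remark: by \eqref{eq:log-moment-admissibility} and \eqref{eq:id-assumptions}, \(\int\log(1+b^{-1})U_t(\dd b)\le B_t(\log2+\sqrt{K_T})\). So \(\mu_t\) is a well-defined zero-drift GGC law with Laplace transform \(L_t(s)=\exp\{-B_tF_t(\log(1+se^{-y}))\}\).

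\textbf{Step 1: a Laplace-transform equation.} I first extend \eqref{eq:id-weak-rate} from \(C_c^2\) tests to the resolvent tests \(\varphi_s(y)=(s+e^y)^{-1}\). I will take cutoffs \(\chi_R\varphi_s\) with \(\chi_R(y)=\chi(y/R)\). By \eqref{eq:generator-bound}, the quantity \(|\mathcal G_{B,F}(\chi_R\varphi_s)(y)|\) is bounded by \((|y|+C_I)|(\chi_R\varphi_s)'(y)|+\tfrac12m_2\|(\chi_R\varphi_s)''\|_\infty\). Since \(\varphi_s'\) decays exponentially at both ends and the cutoff derivatives are \(O(1/R)\) times \(\varphi_s\), this bound is uniform in \(R\). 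For each \(y\) the generator converges pointwise: the drift term trivially, and the jump term by dominated convergence with the \(v^2\) dominant. Dominated convergence in \(u\) and \(y\) then gives \(F_t(\varphi_s)-F_0(\varphi_s)=\int_0^tF_u(\mathcal G_{B_u,F_u}\varphi_s)\,\dd u\). Lemma~\ref{lem:generator} turns the integrand into \((h_{U_u}(s)+g_u(s))/B_u\), where \(g_u(s)=B_uF_u(\varphi_s)\) is the log-derivative \eqref{eq:ggc-log-derivative} of \(\mu_u\). Hence \(\partial_t\{B_tF_t(\varphi_s)\}=-B_tF_t(\varphi_s)+B_t\,\partial_tF_t(\varphi_s)=h_{U_t}(s)\) in integrated form. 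Combined with \(h_U(s)=\partial_s\{s\E[X_s\log X_s]\}\) from Lemma~\ref{lem:tangent}, this says: writing \(\Lambda_t(s)=-\log L_t(s)=\int_0^sg_t\), we have \(\Lambda_t(s)-\Lambda_0(s)=\int_0^t s\,\E_{\mu_u}[X_s\log X_s]\,\dd u\), after integrating in \(s\) from \(0\) with the boundary term vanishing. The \(t\)-integration uses Fubini, which is justified by uniform bounds on compact \(s\)-sets. Equivalently, \(\partial_tL_t(s)=-\int e^{-sx}\,sx\log x\,\mu_t(\dd x)\), which is exactly the weak form of \(\partial_t\mu_t+\partial_x(x\log x\,\mu_t)=0\) tested against the functions \(e^{-sx}\). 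Indeed, \((x\log x)\partial_xe^{-sx}=-sx\log x\,e^{-sx}\).

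\textbf{Step 2: the transport equation in logarithmic variables.} Here I expect the main obstacle: uniqueness for a transport equation with the superlinear field \(x\log x\) near \(0\) and \(\infty\), known only against exponential tests. I will first extend the weak equation from \(\{e^{-sx}\}_{s>0}\) to a richer class. Linear combinations of exponentials with uniform bounds are dense in \(C_0([0,\infty))\) by Stone--Weierstrass. However, the product \(x\log x\,\partial_x\phi\) must also converge. I would therefore use tests of the form \(\phi(x)=\psi(\log x)\) with \(\psi\in C_c^1(\R)\), approximated in a weighted \(C^1\) sense by exponential polynomials; alternatively, I would pass through the \(s\)-integrated identity to reach Mellin-type tests. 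Both \(\mu_t\) and \(\Law(X_0^{e^t})\) put no mass at \(0\), since a finite-mass GGC has a density. I then set \(\rho_t=\Law(\log X)\) under \(\mu_t\). Since \(x\log x\,\partial_x\phi=(\log x)\psi'(\log x)\), this gives \(\partial_t\rho_t(\psi)=\rho_t(y\psi')\), the linear transport with the Lipschitz field \(y\mapsto y\). Uniqueness of narrowly continuous solutions of a transport equation with a Lipschitz field is standard: test against \(\psi(e^{-(t-u)}y)\) along the flow, or use the duality argument. The solution is \(\rho_t=(y\mapsto e^ty)_*\rho_0\), that is, \(\mu_t=\Law(X_0^{e^t})\).

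Narrow continuity of \(t\mapsto\rho_t\) follows from continuity of \(L_t(s)\) in \(t\), which holds because \(F_t\) is narrowly continuous with uniform second moments. As a fallback for the density step, I would work directly with the characteristic function of \(\log X\) under tilts. In that route the explicit dilation \(\Law(X_0^{e^t})\) is verified to satisfy the same Laplace identity; uniqueness then reduces to showing that a signed difference with the same weak equation vanishes, via a Grönwall argument on the \(d_{\mathrm{BL}}\) distance after the logarithmic change of variables.
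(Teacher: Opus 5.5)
You follow the same route as the paper: extend the weak equation to $\varphi_s$ by cutoffs, apply Lemma~\ref{lem:generator}, let $B_t'=-B_t$ cancel $g_t$, integrate in $s$ to the Laplace exponent, pass to tests $\zeta(\log x)$ by $C^1$ approximation with exponential polynomials, and solve the linear transport with backward dilation tests. There is, however, a genuine gap in the sentence ``after integrating in $s$ from $0$ with the boundary term vanishing \ldots\ Fubini \ldots\ justified by uniform bounds on compact $s$-sets.'' The paper calls this step the main analytic point of the proof. Bounds on compact subsets of $(0,\infty)$ give no information as $s\downarrow0$, and no uniform bound holds there. Indeed, $A_t(s)=\E_{t,s}[X\log X]$ tends to $\E_{\mu_t}[X\log X]$, while $\E_{\mu_t}X=B_tF_t(e^{-y})$ is not controlled by $K_T$ and may be infinite. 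You actually need two things. First, $\lim_{s\downarrow0}sA_t(s)=0$; since the only available domination is $|sx\log x\,e^{-sx}|\le|\log x|/e$, this requires $\E_{\mu_t}|\log X|<\infty$. Second, $h\in L^1([0,T]\times(0,s_0))$ for the time--Laplace exchange. Using $\int_0^{s_0}xe^{-sx}\,\dd s\le1$, $\int_0^{s_0}sx^2e^{-sx}\,\dd s\le1$, $sxe^{-sx}\le e^{-1}$, and a uniform lower bound $L_t(s_0)\ge D>0$, this reduces to $\sup_{t\le T}\E_{\mu_t}|\log X|<\infty$. Admissibility of $U_t$ does not give this bound, since it only controls $F_t(y_-)$; you need the second log-rate moment.

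The paper proves this bound as \eqref{eq:id-log-moment}. For the negative part, it writes $X_t\overset{d}=G_tM_t$ with $M_t=\int e^{-y}Q_t(\dd y)$. Jensen gives $(\log M_t)^-\le\int y_+\,Q_t(\dd y)$, so $\E(\log M_t)^-\le\sqrt{K_T}$, and separately $\E(\log G_t)^-\le1/(\Gamma(B_t)B_t^2)$. For the positive part, the identity $\log(1+x)=\int_0^\infty r^{-1}e^{-r}(1-e^{-rx})\,\dd r$ gives $\E\log(1+X_t)\le\int\mathcal I\,\dd U_t$ with $\mathcal I(b)\le4\{1+(\log_+(1/b))^2\}$; this is where $K_T$ enters.

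An alternative repair also works. Integrate only over $[s_1,s]\subset(0,\infty)$; this yields the exponent identity up to an $s$-independent constant $C(t)$. Then run the transport argument for the finite measures $e^{C(t)}\mu_t$ and conclude $C\equiv0$ from conservation of total mass. Either way, some such argument must be supplied.

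By contrast, the step you flagged as the main obstacle is comparatively short. Approximating $J(u)=H(-\log u)$ in $C^1([0,1])$ by polynomials, together with $\sup_{x>0}|x\log x|e^{-x}<\infty$, controls the transport term. After $x=e^z$ the field is linear.
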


The hypotheses are exactly those supplied by Theorem~\ref{thm:evolution}.
The main analytic point is integration of the resolvent evolution down to the zero Laplace argument.
We establish a uniform logarithmic moment for this purpose, and then obtain the transport equation by approximation of its test functions.

\begin{proof}
For $T=0$, admissibility follows from \eqref{eq:log-moment-admissibility} and the distributional identity is immediate.
We may therefore assume $T>0$.
We distinguish the log-rate probabilities $F_t$ from the value distributions $\mu_t$ throughout the argument.
Lemma~\ref{lem:bounds}, on the compact mass interval $I=[B_0e^{-T},B_0]$, gives a constant $C_T<\infty$ such that
\begin{equation}
 |a_{B_t,F_t}(y)-y|\le C_T,\qquad 0\le k_{B_t,F_t}(y,v)\le1,
 \qquad m_2:=\int_\R v^2\nu_0(\dd v)<\infty.
 \label{eq:id-generator-bounds}
\end{equation}
The compensation in $\mathcal G$ is over the whole jump line.

\medskip\noindent\emph{Admissibility and transform continuity.}
For $s>0$, define
\begin{equation}
 \begin{aligned}
 \ell_s(y)&=\log(1+se^{-y}),&
 \Psi_t(s)&=B_tF_t(\ell_s),\\
 L_t(s)&=\exp\{-\Psi_t(s)\},&
 g_t(s)&=B_t\int_\R\frac{F_t(\dd y)}{s+e^y}.
 \end{aligned}
 \label{eq:id-transforms}
\end{equation}
The elementary bound
\begin{equation}
 0\le\ell_s(y)\le\log(1+s)+y_-
 \label{eq:id-log-test-bound}
\end{equation}
shows that $\int\log(1+1/b)\,U_t(\dd b)<\infty$.
Consequently $U_t$ defines the asserted GGC law $\mu_t$, with transform $L_t$, and differentiation at positive $s$ gives $g_t=\partial_s\Psi_t=-\partial_s\log L_t$.
In particular, no first moment of the positive rate $e^y$ is needed.

The moment assumption implies, uniformly in $t$,
\begin{equation}
 F_t\{|y|>R\}\le\frac{K_T}{R^2},\qquad
 \int_{|y|>R}|y|F_t(\dd y)\le\frac{K_T}{R},\qquad R>0.
 \label{eq:id-rate-tails}
\end{equation}
Truncating the continuous function $\ell_s$, whose growth is at most linear, therefore shows that $t\mapsto\Psi_t(s)$ and $t\mapsto L_t(s)$ are continuous.
For each fixed $t$, dominated convergence, with dominating function $\ell_1$ when $s\le1$, gives $\Psi_t(s)\downarrow0$ as $s\downarrow0$.
Thus $L_t(0)=1$.

\medskip\noindent\emph{Logarithmic moments of the value distributions.}
The following estimate controls both the behavior near zero and the upper tail of the value distributions:
\begin{equation}
 \sup_{0\le t\le T}\int_{(0,\infty)}|\log x|\,\mu_t(\dd x)
 \le C_{\log,T}<\infty,
 \label{eq:id-log-moment}
\end{equation}
where the constant depends only on $B_0,T,K_T$.
This is the integrability needed at the zero Laplace endpoint; an $|x\log x|$ moment is not assumed.

For each fixed $t$, let $X_t$ have distribution $\mu_t$; no joint process $(X_t)$ is assumed.
Here $t$ indexes the evolution in time, whereas the earlier notation $X_s$ denotes an exponential tilt with parameter $s$.
For the negative part, Lemma~\ref{lem:gamma-dirichlet}, applied in log-rate coordinates, gives
\[
 X_t\overset{d}=G_tM_t,\qquad
 M_t=\int_\R e^{-y}Q_t(\dd y),\qquad
 Q_t\sim\DP(B_tF_t),
\]
with $G_t\sim\operatorname{Gamma}(B_t,1)$ independent of $Q_t$; only the representation at each fixed $t$ is needed.
The logarithmic integrability already verified makes $M_t$ finite almost surely, and it is strictly positive.
In particular, $\mu_t((0,\infty))=1$.
Moreover, $\E Q_t(|y|)=F_t(|y|)<\infty$.
On this almost-sure integrability event, put $m=\int y\,Q_t(\dd y)$ and integrate $e^{-(y-m)}\ge1-(y-m)$ to obtain $e^mM_t\ge1$.
Hence
\[
 (\log M_t)^-\le\left(\int y\,Q_t(\dd y)\right)^+
 \le\int y_+\,Q_t(\dd y),
 \qquad \E(\log M_t)^-\le\sqrt{K_T}.
\]
For a gamma variable $G\sim\operatorname{Gamma}(B,1)$,
\[
 \E(\log G)^-
 \le\frac1{\Gamma(B)}\int_0^1(-\log x)x^{B-1}\,\dd x
 =\frac1{\Gamma(B)B^2}.
\]
This is bounded uniformly for $B\in I$.
Since $(\log X_t)^-\le(\log G_t)^-+(\log M_t)^-$ under this coupling, the negative part of \eqref{eq:id-log-moment} is controlled.

For the positive part, Tonelli's theorem and the elementary identity
\[
 \log(1+x)=\int_0^\infty\frac{e^{-r}}r(1-e^{-rx})\,\dd r
 \qquad (x\ge0)
\]
give, using $1-e^{-z}\le z$ for $z\ge0$,
\begin{equation}
 \begin{aligned}
 \E\log(1+X_t)
 &=\int_0^\infty\frac{e^{-r}}r(1-L_t(r))\,\dd r
 \le\int_{(0,\infty)}\mathcal{I}(b)\,U_t(\dd b),\\
 \mathcal{I}(b)&:=\int_0^\infty\frac{e^{-r}}r\log(1+r/b)\,\dd r
 \le4\{1+(\log_+(1/b))^2\}.
 \end{aligned}
 \label{eq:id-positive-log}
\end{equation}
Here the logarithmic identity follows, for example, by writing $(1-e^{-rx})/r=\int_0^x e^{-rz}\,\dd z$ and integrating first in $r$.
We next prove the bound on $\mathcal{I}$ for all positive rates.
If $b\ge1$, then $\mathcal{I}(b)\le1/b$, by $\log(1+r/b)\le r/b$.
If $0<b<1$, put $a=\log(1/b)$.
The part $0<r<b$ is at most one.
On $b<r<1$, use $\log(1+r/b)\le\log2+\log(r/b)$; its integral after replacing $e^{-r}$ by one is at most $a\log2+a^2/2$.
On $r>1$, use
\[
 \log(1+r/b)\le\log(1+r)+a\le r+a,\qquad r^{-1}\le1,
\]
which bounds this part by $e^{-1}(1+a)$.
The sum is at most $4(1+a^2)$.
Thus \eqref{eq:id-positive-log} is bounded by $4B_0(1+K_T)$.
Combining this bound with the negative-part estimate, we obtain \eqref{eq:id-log-moment}; one possible choice is
\[
 C_{\log,T}=
 \sup_{B\in I}\frac1{\Gamma(B)B^2}
 +\sqrt{K_T}+4B_0(1+K_T).
\]

The family $(\mu_t)$ is narrowly continuous on $(0,\infty)$.
Indeed, \eqref{eq:id-log-moment} bounds its mass outside $[e^{-R},e^R]$ by $C_{\log,T}/R$.
If $t_n\to t$, tightness and diagonal selection of distribution functions give a weakly convergent subsequence, and this tail bound ensures that its limit is a probability on $(0,\infty)$.
Its transform is $L_t$, by the transform continuity proved above.
These transforms determine a probability uniquely: push the measure forward by $x\mapsto e^{-x}$ to $[0,1]$; the transform values at positive integers give all its positive integer moments, and its zeroth moment is one.
Uniform polynomial approximation of continuous functions then gives uniqueness.
The polynomial approximation argument used below also proves this uniform approximation assertion.
Every subsequential limit is therefore $\mu_t$, proving narrow continuity.

\medskip\noindent\emph{Extension of the log-rate test class.}
The resolvent tests used in the next step are not compactly supported.
We therefore first justify their use in the weak equation.
The weak equation \eqref{eq:id-weak-rate} extends to every $\varphi\in C^2(\R)$ with at most linear growth and with bounded first and second derivatives.
Choose $\chi\in C_c^\infty(\R)$ equal to one on $[-1,1]$ and zero outside $[-2,2]$, and set $\varphi_R(y)=\chi(y/R)\varphi(y)$ for $R\ge1$.
The functions $\varphi_R'$ and $\varphi_R''$ are bounded uniformly in $R$: in the product rule the extra factors $R^{-1}$ and $R^{-2}$ multiply a function of at most linear growth on $|y|\le2R$.
Taylor's formula with full linear compensation therefore gives
\begin{equation}
 \begin{aligned}
 |\mathcal G_{B_t,F_t}\varphi_R(y)|
 &\le (|y|+C_T)|\varphi_R'(y)|
       +\frac{m_2}{2}\|\varphi_R''\|_\infty
 \le C_{T,\varphi}(1+|y|),\\
 \mathcal G_{B_t,F_t}\varphi_R(y)
 &\longrightarrow\mathcal G_{B_t,F_t}\varphi(y).
 \end{aligned}
 \label{eq:id-domain-extension}
\end{equation}
For the second assertion, the compensated jump remainder is bounded by a constant times $v^2$ for every $v$.
The finite second moment in \eqref{eq:id-generator-bounds} therefore controls both the singularity at zero and the tails.
Dominated convergence in $F_t$ and then in time, using $\sup_tF_t(|y|)\le\sqrt{K_T}$, proves the extended weak equation.
The left side converges by the same linear-growth bound.
Also \eqref{eq:id-rate-tails} makes $F_t(\varphi)$ continuous in time.
The extended equation is thus an absolutely continuous scalar identity.

In particular, this applies to
\[
 \varphi_s(y)=(s+e^y)^{-1},\qquad \ell_s(y)=\log(1+se^{-y}),\qquad s>0.
\]
The first function and its first two derivatives are bounded.
For the second, $|\ell_s'|\le1$ and $|\ell_s''|\le1/4$.
Thus the logarithmic test defining $\Psi_t$ is legitimate as well as the resolvent used below.

\medskip\noindent\emph{The resolvent equation.}
Let $X$ denote the coordinate variable on $(0,\infty)$, and write $\E_t H(X)=\int H(x)\mu_t(\dd x)$.
For an integrable test under the exponentially tilted law, set
\[
 \E_{t,s}H(X)=\frac{\int H(x)e^{-sx}\,\mu_t(\dd x)}{L_t(s)}
 \qquad(s>0).
\]
Thus $\E_{t,s}$ is expectation under the normalized exponential tilt of $\mu_t$.
Put
\begin{equation}
 \begin{aligned}
 A_t(s)&=\E_{t,s}[X\log X],\\
 h_t(s)&=\left.\frac{\partial}{\partial q}
 \left[-\partial_s\log\E_t e^{-sX^q}\right]\right|_{q=1}
 =\frac{\dd}{\dd s}\{sA_t(s)\}.
 \end{aligned}
 \label{eq:id-current-tangent}
\end{equation}
Here the power derivative is taken at the current law $\mu_t$, to which Lemma~\ref{lem:tangent} applies.
The last identity can also be obtained by differentiating the normalized exponential tilt.
All derivatives involved are bounded functions of $x$ after multiplication by $e^{-sx^q}$ when $s$ and $q$ range over compact positive intervals.

By Lemma~\ref{lem:generator}, the normalized resolvent identity is
\[
 F_t(\mathcal G_{B_t,F_t}\varphi_s)=\frac{h_t(s)+g_t(s)}{B_t}.
\]
Since $g_t(s)=B_tF_t(\varphi_s)$, the extended weak equation and $B_t'=-B_t$ imply
\begin{equation}
 g_t(s)-g_0(s)=\int_0^t h_u(s)\,\dd u,
 \qquad s>0.
 \label{eq:id-resolvent-evolution}
\end{equation}
The decay of the total Thorin mass thus cancels the extra $g_t(s)$ in the generator identity.

\medskip\noindent\emph{The endpoint at zero.}
To recover the Laplace exponent from its $s$-derivative, we must integrate \eqref{eq:id-resolvent-evolution} down to $s=0$.
Both absolute integrability and the boundary value are needed to recover the normalized Laplace exponent.
Differentiating a normalized tilt in \eqref{eq:id-current-tangent} gives
\begin{equation}
 h_t(s)=A_t(s)-s\E_{t,s}[X^2\log X]
                  +s\E_{t,s}[X]A_t(s).
 \label{eq:id-tangent-expanded}
\end{equation}
We first record joint measurability in time and the Laplace argument.
Fix $0<\sigma<S<\infty$.
If $f_s(x)$ is any one of
\[
 e^{-sx},\quad xe^{-sx},\quad
             x\log x\,e^{-sx},\quad x^2\log x\,e^{-sx},
\]
the map $s\mapsto f_s$ is continuous in the supremum norm on $(0,\infty)$ for $s\in[\sigma,S]$.
Indeed, differentiation in $s$ adds one factor $-x$, and the resulting derivatives are bounded uniformly in $x>0$ and $s\in[\sigma,S]$; the mean value theorem then gives the assertion.
If $(t_n,s_n)\to(t,s)$ in $[0,T]\times[\sigma,S]$, the narrow continuity of the value laws proved above gives
\[
 |\mu_{t_n}(f_{s_n})-\mu_t(f_s)|
 \le\|f_{s_n}-f_s\|_\infty
       +|\mu_{t_n}(f_s)-\mu_t(f_s)|\longrightarrow0.
\]
Since $L_t(s)>0$, division by the jointly continuous denominator $L_t(s)$ preserves continuity.
Equation~\eqref{eq:id-tangent-expanded} therefore shows that $(t,s)\mapsto A_t(s)$ and $(t,s)\mapsto h_t(s)$ are jointly continuous on $[0,T]\times(0,\infty)$, and in particular jointly Borel.
We handle the endpoint $s=0$ by the integral estimates below.

Also, $t\mapsto\mu_t$ is a Borel probability kernel: evaluation on an open set is measurable by bounded continuous approximation, and the monotone-class theorem extends this to every Borel set.
Thus nonnegative Borel integrands may be integrated against $\dd t\,\dd s\,\mu_t(\dd x)$ using Tonelli's theorem.
This supplies the measurability needed for the estimates below, including those with the unbounded test $|\log x|$.

Fix $s_0>0$.
Equations \eqref{eq:id-transforms} and \eqref{eq:id-log-test-bound} give the uniform lower bound
\begin{equation}
 D:=\exp\{-B_0[\log(1+s_0)+\sqrt{K_T}]\}
 \le L_t(s_0)\le L_t(s),\qquad 0<s\le s_0.
 \label{eq:id-transform-lower-bound}
\end{equation}
For every $x>0$,
\[
 \int_0^{s_0}xe^{-sx}\,\dd s\le1,\qquad
 \int_0^{s_0}sx^2e^{-sx}\,\dd s\le1,\qquad
 sx e^{-sx}\le e^{-1}.
\]
Taking absolute values before applying Tonelli therefore yields
\begin{equation}
 \begin{aligned}
 \int_0^{s_0}|A_t(s)|\,\dd s
       &\le D^{-1}\E_t|\log X|,\\
 \int_0^{s_0}s\E_{t,s}[X^2|\log X|]\,\dd s
       &\le D^{-1}\E_t|\log X|,\\
 s\E_{t,s}X&\le(eD)^{-1},\\
 \int_0^T\int_0^{s_0}|h_t(s)|\,\dd s\,\dd t
       &\le T\left(\frac2D+\frac1{eD^2}\right)C_{\log,T}<\infty.
 \end{aligned}
 \label{eq:id-absolute-fubini}
\end{equation}
The last bound includes the product term in \eqref{eq:id-tangent-expanded}; it follows by multiplying the first bound by $(eD)^{-1}$.
Together with the joint measurability just proved, this establishes that $h(t,s):=h_t(s)$ belongs to $L^1([0,T]\times(0,s_0))$.
Fubini's theorem therefore applies on every subrectangle $[0,t]\times(0,s)$ with $t\le T$ and $s\le s_0$.

For every fixed $t$,
\begin{equation}
 \lim_{s\downarrow0}sA_t(s)=0,
 \qquad |sA_t(s)|\le\frac{\E_t|\log X|}{eD}\quad(0<s\le s_0).
 \label{eq:id-zero-endpoint}
\end{equation}
Indeed, $sx e^{-sx}\log x\to0$ pointwise, and its absolute value is at most $|\log x|/e$.
Dominated convergence under $\mu_t$, followed by division by $L_t(s)\ge D$, proves the limit.
The bound in \eqref{eq:id-log-moment} also permits passage of this limit through a time integral.
The pointwise limit in $t$ and this integrable bound suffice for that passage.

The fundamental theorem of calculus applied to \eqref{eq:id-current-tangent} on $[\varepsilon,s]$, followed by \eqref{eq:id-zero-endpoint}, gives
\[
 \int_0^s h_t(r)\,\dd r=sA_t(s).
\]
The integral is absolutely convergent, and \eqref{eq:id-absolute-fubini} justifies its time--Laplace Fubini exchange.
The logarithmic moment in \eqref{eq:id-log-moment} has supplied all the required endpoint control.

\medskip\noindent\emph{The Laplace equation.}
By Tonelli's theorem, $\int_0^s g_t(r)\,\dd r=\Psi_t(s)$.
Integrating \eqref{eq:id-resolvent-evolution}, with the absolute bounds just proved, therefore yields
\begin{equation}
 \Psi_t(s)-\Psi_0(s)=\int_0^t sA_u(s)\,\dd u.
 \label{eq:id-exponent-evolution}
\end{equation}
The normalization is fixed by $\Psi_t(0)=0$ and the vanishing boundary term in \eqref{eq:id-zero-endpoint}.
Applying the absolutely continuous chain rule to $L_t=e^{-\Psi_t}$ gives
\begin{equation}
 L_t(s)-L_0(s)
 =-\int_0^t s\int_{(0,\infty)}x\log x\,e^{-sx}\,\mu_u(\dd x)\,\dd u,
 \qquad s>0.
 \label{eq:id-laplace-evolution}
\end{equation}
Here $L_u(s)A_u(s)=\E_u[X\log X e^{-sX}]$ has removed the tilt normalization.
For each fixed $s>0$, the function $x\log x\,e^{-sx}$ is bounded and continuous and tends to zero at both endpoints.
Narrow continuity of $(\mu_u)$ makes the time integrand in \eqref{eq:id-laplace-evolution} continuous.

\medskip\noindent\emph{The weak equation on value space.}
We next derive the weak equation for the value distributions.
We approximate a test and its derivative simultaneously, so that the transport term converges as well.
Let $H\in C_c^1((0,\infty))$.
On $0<u<1$, set $J(u)=H(-\log u)$.
Because $H$ vanishes near zero and infinity, $J$ extends by zero to a $C^1$ function on $[0,1]$, vanishing near both endpoints.
There are polynomials $p_n$ satisfying
\begin{equation}
 \|p_n-J\|_{\infty,[0,1]}+
 \|p_n'-J'\|_{\infty,[0,1]}\longrightarrow0.
 \label{eq:id-c1-polynomials}
\end{equation}
We construct these polynomials as follows.
For each integer $n\ge1$, form the Bernstein polynomial of $J'$,
\[
 q_n(u)=\sum_{j=0}^n J'(j/n)\binom{n}{j}u^j(1-u)^{n-j}.
\]
For a binomial random variable $N$ with parameters $(n,u)$, their error is bounded by $\E|J'(N/n)-J'(u)|$.
On $|N/n-u|\le\delta$ use the modulus of continuity of $J'$; on its complement use $2\|J'\|_\infty$ and
\[
 \mathbb P\{|N/n-u|>\delta\}
 \le\frac{u(1-u)}{n\delta^2}\le\frac1{4n\delta^2}.
\]
First choosing small $\delta$ and then large $n$ proves uniform convergence of $q_n$ to $J'$.
The polynomials $p_n(u)=J(0)+\int_0^u q_n(v)\,\dd v$ prove \eqref{eq:id-c1-polynomials}.
The same Bernstein estimate applied to any continuous function, without differentiating it, proves the uniform polynomial approximation used above for transform uniqueness.

Put $H_n(x)=p_n(e^{-x})$ and define the value-space transport operator by
\[
 \mathscr{B}H(x)=x\log x\,H'(x),\qquad x>0.
\]
In the following estimates, norms of $H_n,H$ are taken on $(0,\infty)$, whereas norms of $p_n,J$ and their derivatives are taken on $[0,1]$:
\begin{equation}
 \begin{aligned}
 \|H_n-H\|_\infty&\le\|p_n-J\|_\infty,\\
 \|\mathscr{B}H_n-\mathscr{B}H\|_\infty
 &\le\left(\sup_{x>0}|x\log x|e^{-x}\right)
                   \|p_n'-J'\|_\infty\longrightarrow0.
 \end{aligned}
 \label{eq:id-generator-norm}
\end{equation}
The displayed supremum is finite, because its integrand is continuous and vanishes at zero and infinity.
Each $H_n$ is a finite linear combination of a constant and the functions $e^{-jx}$ for positive integers $j$.
Its weak evolution is therefore supplied by \eqref{eq:id-laplace-evolution}; the constant has zero evolution since each $\mu_u$ is a probability.
Both uniform limits in \eqref{eq:id-generator-norm} can be passed through probability integrals and the finite time integral.
We obtain
\begin{equation}
 \mu_t(H)-\mu_0(H)
 =\int_0^t\mu_u(x\log x\,H'(x))\,\dd u,
 \qquad H\in C_c^1((0,\infty)).
 \label{eq:id-value-weak}
\end{equation}
The simultaneous convergence in \eqref{eq:id-generator-norm} is what transfers the Laplace equation to the transport operator.

\medskip\noindent\emph{Transport of logarithmic values.}
The gamma--Dirichlet representation shows that $\mu_t$ gives full mass to $(0,\infty)$.
Let $\lambda_t=(\log)_*\mu_t$ be the push-forward of $\mu_t$ under $x\mapsto\log x$, so $\lambda_t=\Law(\log X_t)$.
Thus $\lambda_t$ describes logarithmic values, while $F_t$ describes logarithmic rates.
It is narrowly continuous, since logarithm is a continuous map on $(0,\infty)$, and \eqref{eq:id-log-moment} gives a uniform first moment of $\lambda_t$.

For $\zeta\in C_c^1(\R)$, the function $H(x)=\zeta(\log x)$ lies in $C_c^1((0,\infty))$.
Substitution into \eqref{eq:id-value-weak} gives
\begin{equation}
 \lambda_t(\zeta)-\lambda_0(\zeta)
 =\int_0^t\lambda_u(z\zeta'(z))\,\dd u.
 \label{eq:id-log-transport}
\end{equation}
The characteristics of this equation are dilations.
We verify the resulting identity directly with backward test functions.
Fix $0<t\le T$ and $\zeta\in C_c^\infty(\R)$, and set
\begin{equation}
 \zeta_u(z)=\zeta(e^{t-u}z),\qquad 0\le u\le t.
 \quad\text{Then}\quad
 \partial_u\zeta_u+z\partial_z\zeta_u=0.
 \label{eq:id-backward-test}
\end{equation}
All these tests and their derivatives have support in one compact interval, and their time and spatial derivatives are jointly continuous.

To use these time-dependent tests in the fixed-test identity, we write the increments on a partition.
Take a partition $0=t_0<\cdots<t_N=t$ and decompose each increment as
\[
 \begin{aligned}
 &\lambda_{t_{i+1}}(\zeta_{t_{i+1}})
       -\lambda_{t_i}(\zeta_{t_i})\\
 &\quad=\lambda_{t_{i+1}}(\zeta_{t_{i+1}}-\zeta_{t_i})
       +\lambda_{t_{i+1}}(\zeta_{t_i})
       -\lambda_{t_i}(\zeta_{t_i})\\
 &\quad=\int_{t_i}^{t_{i+1}}
          \lambda_{t_{i+1}}(\partial_u\zeta_u)\,\dd u
       +\int_{t_i}^{t_{i+1}}
          \lambda_u(z\partial_z\zeta_{t_i})\,\dd u,
 \end{aligned}
\]
where the second integral uses the fixed-test identity \eqref{eq:id-log-transport}.
Joint continuity, common compact support, and narrow continuity of $\lambda$ imply joint continuity of $(r,u)\mapsto\lambda_r(\partial_u\zeta_u)$ and of the analogous spatial-derivative pairing.
For example, joint continuity follows by adding a supremum-norm difference of the test functions to the weak continuity term for one fixed test.
On the compact time square it is uniform.
Consequently, as the mesh tends to zero, the summed integrals converge to
\[
 \int_0^t\lambda_u(\partial_u\zeta_u+z\partial_z\zeta_u)\,\dd u=0.
\]
The telescoped left side is $\lambda_t(\zeta)-\lambda_0(\zeta(e^t\,\cdot))$.
Thus
\[
 \lambda_t(\zeta)=\lambda_0(\zeta(e^t\,\cdot)),
 \qquad \zeta\in C_c^\infty(\R).
\]
Compactly supported smooth tests determine finite Borel measures on $\R$, as follows by approximating compactly supported continuous functions uniformly by smooth ones and then approximating interval indicators.
Therefore
\begin{equation}
 \lambda_t=(z\mapsto e^t z)_*\lambda_0,
 \qquad
 \mu_t=(x\mapsto x^{e^t})_*\mu_0=\Law(X_0^{e^t}).
 \label{eq:id-transport-identification}
\end{equation}
The value laws are supported on $(0,\infty)$, so exponentiating the identified logarithmic laws proves \eqref{eq:id-conclusion} for every $t\in[0,T]$.
\end{proof}

\section{Proof of the main theorem}
\label{sec:completion}

\begin{proof}[Proof of Theorem~\ref{thm:main}]
Consider first the finite gamma convolution \eqref{eq:finite-input}, and fix a real $q>1$.
Its log-rate probability $F_0$ has finite support and hence belongs to $\Ptwo$.
We apply Theorem~\ref{thm:evolution} on the finite interval $[0,T]$ with $T=\log q$.
It gives a narrowly continuous probability curve $F_t$ with uniformly bounded second log-rate moments, satisfying the exact weak generator equation and $B_t=B_0e^{-t}$.
By \eqref{eq:log-moment-admissibility}, each $U_t=B_t\exp_*F_t$ is a positive admissible Thorin measure, so it defines a zero-drift GGC law $\mu_t$.

The initial law is $\mu_0=\Law(X)$.
The weak equation and the uniform second-moment bound supplied by the construction are precisely the hypotheses of Theorem~\ref{thm:identification}.
It follows that
\[
 \mu_t=\Law(X^{e^t})\quad(0\leq t\leq T).
\]
In particular $\Law(X^q)\in\GGC$.
This proves the assertion for every finite gamma convolution and every real $q>1$; the case $q=1$ is immediate.

Now consider an arbitrary nonnegative GGC random variable $X$ and fix any $q\geq1$.
By Lemma~\ref{lem:ggc-closure} there are finite gamma convolutions $X_m\Rightarrow X$.
Each $X_m^q$ is GGC by the first part.
The function $x\mapsto x^q$ is continuous on $[0,\infty)$, so $X_m^q\Rightarrow X^q$.
The conclusion follows from weak closure of the GGC class.
\end{proof}

\begin{remark}
The approximation in Lemma~\ref{lem:ggc-closure} includes positive drift, infinite Thorin mass, and the constant zero.
The second logarithmic moment is used only to construct the evolution for a finite gamma convolution; it imposes no restriction on the limiting GGC variable.
The constants may depend on the approximating variable, since weak closure is applied after each powered approximant has been shown to belong to $\GGC$.
Likewise, a fixed real $q>1$ requires only the finite interval $[0,\log q]$.
\end{remark}

\appendix
\section{Measurable realizations}
\label{sec:measurable-realizations}

We give the parameterized Dirichlet construction and the measurable version of the bounded phase used above.
The common probability space makes the coefficients jointly measurable in the mass, the base probability, and the distinguished rate.

\subsection{Dirichlet probabilities with varying parameters}

We use the stick-breaking formula of Lemma~\ref{lem:stick-breaking}, choosing the locations by quantiles and the break fractions by common uniform random variables.

\begin{lemma}[A parameterized Dirichlet realization]
\label{lem:parameterized-dirichlet}
There is a fixed probability space \((\Omega,\mathcal A,\mathbb P)\) and jointly Borel maps
\[
 (B,F,\omega)\longmapsto Q(B,F,\omega),\qquad
 (B,F,y,\omega)\longmapsto\widehat P(B,F,y,\omega)
\]
into \(\mathcal P(\R)\), for \(B>0\), \(F\in\mathcal P(\R)\), and \(y\in\R\), such that
\[
 Q(B,F,\cdot)\sim\DP(BF),\qquad
 \widehat P(B,F,y,\cdot)\sim\DP(BF+\delta_y).
\]
The construction uses a single event of probability one on which the stick-breaking weights sum to one for every \(B>0\) and every \(F\).
All spaces of probabilities carry their narrow Borel sigma-fields.
\end{lemma}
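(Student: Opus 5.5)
The construction is explicit: one fixed probability space, two i.i.d.\ uniform sequences, and the stick-breaking formula of Lemma~\ref{lem:stick-breaking}, fed with quantile-transformed locations and power-transformed break fractions.

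\emph{Probability space.} Take \(\Omega=(0,1)^{\mathbb N}\times(0,1)^{\mathbb N}\times(0,1)\) with product Lebesgue measure. Write the coordinates as \((S_j)_{j\ge1}\), \((T_j)_{j\ge1}\), \(T_0\); they are independent uniforms. For \(F\in\mathcal P(\R)\) let
\[
q_F(s)=\inf\{x:F((-\infty,x])\ge s\}
\]
be the quantile function. Set \(Z_j=q_F(S_j)\), \(V_j=1-T_j^{1/B}\), \(W_j=V_j\prod_{i<j}(1-V_i)\), and \(Q(B,F,\omega)=\sum_jW_j\delta_{Z_j}\). Finally set \(\widehat P(B,F,y,\omega)=(1-Z^*)Q+Z^*\delta_y\) with \(Z^*=1-T_0^{1/B}\).

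\emph{The common full-probability event.} Let \(\Omega_0\) be the event on which \(T_j\in(0,1)\) for all \(j\ge0\) and \(\sum_j(-\log T_j)=\infty\). On \(\Omega_0\) we have \(1-V_j=T_j^{1/B}\), so the residual mass is
\[
R_m=\prod_{j\le m}T_j^{1/B}=\exp\Bigl(-B^{-1}\sum_{j\le m}(-\log T_j)\Bigr).
\]
This tends to zero for every \(B>0\) at once, and it involves \(F\) not at all. The event \(\Omega_0\) has probability one because the \(-\log T_j\) are i.i.d.\ exponentials, so their partial sums diverge almost surely (strong law). On \(\Omega\setminus\Omega_0\), define both maps to equal \(\delta_0\).

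\emph{Laws.} For fixed \((B,F)\), the \(Z_j\) are i.i.d.\ with law \(F\), since the quantile transform of a uniform has law \(F\). The \(V_j\) are i.i.d.\ \(\operatorname{Beta}(1,B)\), since \(\mathbb P(1-T^{1/B}>z)=(1-z)^B\). The two families are independent. Lemma~\ref{lem:stick-breaking} therefore gives \(Q\sim\DP(BF)\). Likewise \(Z^*\sim\operatorname{Beta}(1,B)\) is independent of \(Q\), so the coupling \eqref{eq:palm-coupling}, read in log-rate coordinates, gives \(\widehat P\sim\DP(BF+\delta_y)\). The Gamma-addition proof of \eqref{eq:palm-coupling} does not depend on whether the rate or the log rate is used as coordinate.

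\emph{Joint measurability (the main step).}
\begin{itemize}
\item \emph{Quantiles.} \((F,s)\mapsto q_F(s)\) is jointly Borel. We have \(q_F(s)\le x\) iff \(F((-\infty,x])\ge s\). For fixed \(x\), the map \(F\mapsto F((-\infty,x])\) is Borel for the narrow sigma-field, because it is a decreasing limit of integrals of bounded continuous functions. So the set \(\{q_F(s)\le x\}\) is Borel in \((F,s)\) for each rational \(x\), and these sets generate.
\item \(B\mapsto V_j\) is continuous.
\item \emph{Truncations.} The narrow Borel sigma-field on \(\mathcal P(\R)\) is generated by the maps \(\mu\mapsto\mu(f)\), \(f\in C_b(\R)\). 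For such \(f\), \(Q(f)=\sum_jW_jf(Z_j)\) is a pointwise limit of finite sums, each jointly Borel in \((B,F,\omega)\). A countable limit of Borel functions is Borel, so \(Q\) is a jointly Borel map into \(\mathcal P(\R)\).
\item \(\widehat P(f)=(1-Z^*)Q(f)+Z^*f(y)\) is then jointly Borel in \((B,F,y,\omega)\).
\item On \(\Omega_0\) the series has total mass one, so the values are genuine probabilities. Patching with the constant \(\delta_0\) on the Borel null set \(\Omega\setminus\Omega_0\) preserves measurability.
\end{itemize}

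The obstacle I would watch is the quantile measurability together with the narrow sigma-field characterization; both are standard, and the rest is bookkeeping.
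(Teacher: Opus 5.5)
Your proposal is correct and follows the paper's construction essentially step for step: the same product space of uniforms, quantile locations $q_F(S_j)$, break fractions $1-T_j^{1/B}$, a single parameter-free event on which $\prod_{j\le m}T_j\to0$, and patching by $\delta_0$ off that event. The differences are only cosmetic. You obtain the null event from the strong law, whereas the paper uses $\E\prod_{j\le m}T_j=2^{-m}$. You prove measurability of $Q$ through the evaluations $\mu\mapsto\mu(f)$, $f\in C_b(\R)$, which generate the narrow Borel $\sigma$-field, whereas the paper uses the narrowly continuous finite approximants $\sum_{j\le m}W_j\delta_{Z_j}+R_m\delta_0$ and takes a pointwise limit in the metrizable space $\mathcal P(\R)$.
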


\begin{proof}
For \(F\in\mathcal P(\R)\) and \(0<u<1\), define its quantile by
\[
 q_F(u)=\inf\{x\in\R:F((-\infty,x])\ge u\}.
\]
This is a finite real number.
For every \(a\in\R\),
\begin{equation}\label{eq:quantile-joint-borel}
 \{(F,u):q_F(u)<a\}
 =\bigcup_{\substack{r\in\mathbb Q\\r<a}}
       \{(F,u):F((-\infty,r])\ge u\}.
\end{equation}
The evaluation \(F\mapsto F((-\infty,r])\) is Borel: the bounded continuous functions \((1-n(x-r)_+)_+\) decrease to the indicator of this half-line, so their integrals converge to that evaluation.
Thus \((F,u)\mapsto q_F(u)\) is jointly Borel.
Right continuity of distribution functions also gives \(\mathbb P\{q_F(S)\le x\}=F((-\infty,x])\) for a uniform \(S\) on \((0,1)\).

Take \(\Omega=(0,1)^{\mathbb N}\times(0,1)^{\mathbb N}\times(0,1)\) with its product Borel sigma-field and product uniform probability.
Write \((S_j)_j,(T_j)_j,T_0\) for its independent coordinate variables.
Suppressing \(\omega\) in the formulas, put
\begin{equation}\label{eq:parameterized-stick-breaking}
 \begin{gathered}
 Z_j(F)=q_F(S_j),\qquad V_j(B)=1-T_j^{1/B},\\
 W_j(B)=V_j(B)\prod_{i<j}(1-V_i(B)).
 \end{gathered}
\end{equation}
These are jointly Borel in their parameters and \(\omega\).
The decreasing products \(\prod_{j\le m}T_j\) tend to zero almost surely: their expectations are \(2^{-m}\), so their nonnegative limit has expectation zero.
Denote this Borel event by \(\Omega_0\).
It does not depend on \(B,F\), and on it
\[
 R_m(B):=\prod_{j\le m}(1-V_j(B))
        =\left(\prod_{j\le m}T_j\right)^{1/B}
        \longrightarrow0\qquad\text{for every }B>0.
\]
In particular \(\sum_jW_j(B)=1\) simultaneously for all parameters on \(\Omega_0\); no intersection of parameter-dependent null sets is needed.

The probability-valued maps
\[
 Q_m(B,F)=\sum_{j\le m}W_j(B)\delta_{Z_j(F)}+R_m(B)\delta_0
\]
are jointly Borel, since finite convex combinations and \(z\mapsto\delta_z\) are continuous in the narrow topology.
On \(\Omega_0\) they converge narrowly to \(Q(B,F)=\sum_{j\ge1}W_j(B)\delta_{Z_j(F)}\); the tail error against a bounded continuous test is at most twice its supremum norm times \(R_m(B)\).
Set both \(Q_m\) and \(Q\) equal to \(\delta_0\) on \(\Omega_0^c\).
The resulting everywhere pointwise limit is a jointly Borel map into the metrizable space \(\mathcal P(\R)\).
For each fixed \(B,F\), its law is \(\DP(BF)\) by Lemma~\ref{lem:stick-breaking}.

Finally, set \(Z_*(B)=1-T_0^{1/B}\) and
\[
 \widehat P(B,F,y)=(1-Z_*(B))Q(B,F)+Z_*(B)\delta_y.
\]
This map is jointly Borel.
For each fixed \(B,F\), the variable \(Z_*(B)\) is \(\operatorname{Beta}(1,B)\) and independent of \(Q(B,F)\).
Lemma~\ref{lem:dirichlet-posterior}, in log-rate coordinates, therefore gives the stated posterior law.
Passing through \(\exp_*\) supplies the corresponding jointly Borel realizations on positive rates as well.
\end{proof}

\subsection{Proof of Lemma~\ref{lem:phase}}
\label{sec:phase-proof}

\begin{proof}[Proof of Lemma~\ref{lem:phase}]
The function \(M_P\) is a nonzero Stieltjes function, since \(\int(1+b)^{-1}P(\dd b)\le1\).
For \(z\) in the upper half-plane,
\[
  \operatorname{Im}M_P(z)
  =-\operatorname{Im}z\int|z+b|^{-2}P(\dd b)<0.
\]
Thus the arguments in \eqref{eq:phase-representative} are unambiguous principal arguments, with \(-\pi<\arg M_P(z)<0\).
Apply \eqref{eq:ssv-exponential} to \(f=1/M_P\), and subtract its real logarithms at \(s\) and \(1\).
Negating the resulting identity gives exactly the sign in \eqref{eq:phase-anchor}.
The uniqueness assertion follows from uniqueness in \eqref{eq:ssv-exponential}: the value at \(1\) fixes the otherwise free multiplicative constant.

Let \(\xi_P^0\) be any almost-everywhere phase initially provided by that representation.
Analytic continuation of \eqref{eq:ssv-exponential} and its imaginary part give
\[
  \arg(1/M_P(-t+i\varepsilon))
  =\int_0^\infty
       \frac{\varepsilon\,\xi_P^0(u)}
            {(u-t)^2+\varepsilon^2}\,\dd u,
  \qquad \varepsilon>0.
\]
Extend \(\xi_P^0\) by zero to the negative half-line.
The normalized integral is its Poisson approximate identity; at each Lebesgue point it tends to \(\xi_P^0(t)\).
For completeness, this convergence follows by splitting the integral into a neighborhood of the point, where the Lebesgue-point averages of the error tend to zero, and its complement, whose Poisson mass tends to zero.
Boundedness of \(\xi_P^0\) controls the complement.
Hence \eqref{eq:phase-representative} agrees with \(\xi_P^0\) almost everywhere and preserves \eqref{eq:phase-anchor}.

For fixed \(n\), the map \((P,t)\mapsto M_P(-t+i/n)\) is jointly Borel (in fact continuous).
The kernel is bounded by \(n\), is continuous in the rate variable, and its dependence on \(t\) is locally uniform, with derivative bounded by \(n^2\).
Taking a continuous argument on the strict lower half-plane and then a limsup proves joint Borel measurability of the specified representative.
The minus sign remains inside the limsup in \eqref{eq:phase-representative}, so the definition is unambiguous also at exceptional boundary points.

Finally,
\[
  \int_0^\infty
   \left|\frac1{s+t}-\frac1{1+t}\right|\dd t=|\log s|,
\]
and, for every integer \(k\ge1\),
\[
  \partial_s^k\log M_P(s)
  =(-1)^k k!\int_0^\infty
                    \frac{\xi_P(t)}{(s+t)^{k+1}}\,\dd t,
  \qquad
  k!\int_0^\infty(s+t)^{-k-1}\dd t
       =(k-1)!s^{-k}.
\]
These estimates prove the absolute convergence and justify all the stated differentiations.
\end{proof}

\bibliographystyle{plainnat}
\bibliography{references}
\end{document}